\newtheorem{thm}{Theorem}[section]
\newtheorem{lem}[thm]{Lemma}
\newtheorem{prop}[thm]{Proposition}
\newtheorem{definition}[thm]{Definition}
\newtheorem{rem}[thm]{Remark}
\newtheorem{corol}[thm]{Corollary}
\journal{Journal of Mathematical Analysis and Applications, \,}
\begin{document}
	
	\begin{frontmatter}
		
		%% Title, authors and addresses
		
		%% use the tnoteref command within \title for footnotes;
		%% use the tnotetext command for theassociated footnote;
		%% use the fnref command within \author or \address for footnotes;
		%% use the fntext command for theassociated footnote;
		%% use the corref command within \author for corresponding author footnotes;
		%% use the cortext command for theassociated footnote;
		%% use the ead command for the email address,
		%% and the form \ead[url] for the home page:
		\title{On tangential transversality\tnoteref{label1}}
		\tnotetext[label1]{This work was partially supported by
			the Sofia University "St. Kliment Ohridski" fund "Research \& Development"
			under contract 80-10-133/25.04.2018  and by the Bulgarian National Scientific Fund under Grant KP-06-H22/4/04.12.2018.\\		
			The first author is also supported by L'Oreal and UNESCO fellowship 'For Women in Science' 2018.}
		\author{Mira Bivas \fnref{label2,label3}}
		\author{Mikhail Krastanov\fnref{label2,label3}}
		\author{Nadezhda Ribarska\corref{cor1}\fnref{label2,label3}}
		%% \ead{email address}
		%% \ead[url]{home page}
		%% \fntext[label2]{}
		\cortext[cor1]{Corresponding author: ribarska@fmi.uni-sofia.bg}
		
		%% \address{Address\fnref{label3}}
		%% \fntext[label3]{}

		%% use optional labels to link authors explicitly to addresses:
		%% \author[label1,label2]{}
		\address[label2]{Faculty of Mathematics and Informatics,  Sofia University, James Bourchier Boul. 5,  1126 Sofia, Bulgaria}
		\address[label3]{Institute of
			Mathematics and Informatics, Bulgarian Academy of Sciences, G.Bonchev str., bl. 8, 1113 Sofia, Bulgaria}
		%% \address[label2]{}

		\begin{abstract}
 	This is the first of two closely related papers on transversality. Here we introduce the notion of tangential transversality of two closed subsets of a Banach space.  It is an intermediate property between transversality and subtransversality. Using it, we obtain a variety of known results and some new ones in a unified way. Our proofs do not use variational principles and we concentrate mainly on tangential conditions in the primal space.
			
		\end{abstract}

\begin{keyword}
nonseparation of sets \sep tangential transversality \sep intersection properties \sep Lagrange multiplier rule

\medskip
MSC codes: 49K27\sep 35F25\sep 46N10
%% or \MSC[2008] code \sep code (2000 is the default)

\end{keyword}

\end{frontmatter}

%% \linenumbers

\section{Introduction}

Transversality is a classical concept of mathematical analysis and differential topology. Recently, it has proven to be useful in variational analysis as well. As it is stated in \cite{Ioffe}, the transversality-oriented language is extremely natural and convenient in some parts of variational analysis, including subdifferential calculus and nonsmooth optimization.

The classical definition of transversality at an intersection point of two smooth manifolds
in a Euclidean space is that the sum of the corresponding tangent spaces at the intersection point
is the whole space (cf. \cite{DG1}, \cite{DG2}). Equivalently, the intersection of the corresponding normal spaces is the origin.

In order to prove the Pontryagin maximum principle (cf., for example, the bibliography of \cite{HS}), Hector Sussmann generalizes the definition of transversality for closed convex cones in $\mathbb{R}^n$:   cones $C^A$ and $C^B$ are
transversal if and only if $$C^A - C^B = \mathbb{R}^n$$ and strongly transversal, if they are transversal and $C^A \cap C^B \not
= \{0\}$ (cf. Definitions 3.1  and  3.2 from \cite{HS}). In finite-dimensional case, strong transversality of the approximating cones of the same type (either Clarke or Boltyanski) is a sufficient condition for local nonseparation of sets. The sets $A$, $B$
containing a point $x_0$ are said to be   locally  separated at $x_0$, if there exists a neighborhood $\Omega$ of $x_0$ so
that $\Omega\cap A \cap B = \{x_0\}$ (cf., for example, the introduction of \cite{HS}).  In infinite-dimensional case, strong transversality of the approximating cones of the same type  does not imply  local nonseparation of sets -- take for example a Hilbert cube	$A:=\{(x_n)\in l_2: \ |x_n|\le 1/n\}\subset l_2$ and a ray $B:= \{\lambda y: \ \lambda \ge 0\}$, where $y:=(1/n^{3/4})_{n=1}^\infty$. We have that the corresponding Clarke tangent cones $\hat T_A(\mathbf{0})= l_2$ and $\hat T_B(\mathbf{0})= B$ are strongly transversal, while the sets $A$ and $B$ are locally separated at 0.

In the literature there exist many notions generalizing the classical transversality as well as transversality of cones. Some of them are introduced under different names by different authors, but actually coincide. We refer to \cite{Kruger2018} for a survey of terminology and comparison of the available concepts. The central ones among them are \textit{transversality} and \textit{subtransversality}. They are also objects of study in the recent book \cite{IoffeBook}.

These notions are closely connected to some important relations between tangent [normal] cones to two sets and the tangent [normal] cone to their intersection. To be more specific, let $T_C(x)$ be the tangent cone (in some sense -- Bouligand, derivable, Clarke, ...) to a closed subset $C$ of the Banach space $X$ at $x\in C$ and $N_C(x)$ be the normal cone (in some sense -- proximal, limiting, $G$-normal, Clarke, ...) to a closed set $C\subset X$ at $x\in C$. For the sake of convenience, we introduce the following
\begin{definition}
	Let $A$ and $B$ be closed subsets of the Banach space $X$ and let $x_0$ belong to $A\cap B$.	
	We say that $A$ and $B$ have tangential intersection property at $x_0$ with respect to the type(s) of the approximating cones $T_A(x_0)$ and $T_B(x_0)$ if
	$$T_{A\cap B}(x_0) \supset T_A(x_0)\cap T_B(x_0) \, . $$
	
	We say that $A$ and $B$ have normal intersection property at $x_0$ with respect to the type(s) of the normal cones $N_A(x_0)$ and $N_B(x_0)$ if
	$$N_{A\cap B}(x_0) \subset N_A(x_0) + N_B(x_0)$$
		and the right-hand side of the above inclusion is weak$^*$-closed.
\end{definition}

The reader is referred to \cite{IoffeBook} and \cite{Penot} for the precise definitions of the above mentioned cones.

Equivalent definitions of transversality (see Theorem 2 in \cite{Kruger2018}) have been around for almost 20 years and are mostly used as   sufficient conditions for normal intersection property with respect to the limiting normal cones in Asplund spaces (cf. \cite{Mord}, \cite{Penot}).
The term subtransversality is recently introduced in \cite{DIL2015} in relation to proving linear convergence of the alternating projections algorithm. However, this property has been around for more than 20 years as well, but under different names -- see Remark 4 in \cite{Kruger2018} and the references therein. It is a key assumption for two types of results: linear convergence of sequences generated by projection algorithms and a qualification condition for normal intersection property with respect to the  limiting normal cones and a sum rule for the limiting subdifferentials. Subtransversality is a weaker condition than transversality, but also implies normal intersection property  (cf.  Theorem 6.41 in \cite{Penot} for the limiting normal cones in Asplund spaces  and Theorem 7.13 in \cite{IoffeBook} for the $G$-normal cones in Banach spaces).

We arrived to the study of transversality of sets when investigating Pontryagin's type maximum principle for  optimal control problems with terminal
constraints in infinite dimensional state space. In order to prove a nonseparation result (if one can not separate the approximating cones of two closed sets  at a common point  and, moreover, the cones have nontrivial intersection, then the sets can not be separated as well) we introduced the notion of uniform tangent set (cf. \cite{KR17}). It happened to be very useful for obtaining necessary conditions for  optimal control problems in infinite dimensional state space, because the diffuse variations (which are naturally defined and easy to calculate) form a uniform tangent set to the reachable set of a control system. The present manuscript and \cite{BKRstt} are  an effort to understand the relation of our study to the established results and methods of nonsmooth optimization. As we arrived to some known notions and results using our approach, the proofs of the known theorems are completely different from the classical ones and, moreover, we found some new   results. Our proofs do not use variational principles and we concentrate mainly on tangential conditions in the primal space. We were able to obtain a vast variety of results in a unified and economical way.

Here we introduce the notion of \textit{tangential transversality} in Banach spaces. It is an intermediate property between transversality and subtransversality.      There are many really useful sufficient conditions for tangential transversality of two sets. One of them -- strong tangential transversality -- involves uniform tangent sets and is studied in detail in \cite{BKRstt}. Moreover, we obtain here a sufficient condition for tangential transversality which is different from  the conditions for subtransversality we know about.

The present  paper is organized as follows: The second section contains the definition of tangential transversality  and the main technical tool allowing us to use this concept. Its proof uses a   nontrivial
construction which essentially appeared in \cite{KRTs} and \cite{KR17}. The relation of tangential transversality with the concepts of  transversality and subtransversality is obtained.  Some consequences of subtransversality are gathered in the third and the fourth sections.  These include a nonseparation theorem, an abstract Lagrange multiplier rule and some tangential intersection properties.   In the fifth section  a sufficient condition for tangential transversality is proved when one of the sets involved is  massive. As corollaries a   sum rule for $G$-subdifferentials and a   Lagrange multiplier rule are obtained.

Throughout the paper if $Y$ is a Banach space, we will denote by ${\mbox{\bf B}_Y}$ [$ \bar{\mbox{\bf{B}}}_Y$] its open [closed] unit ball, centered at the origin. The index could be omitted if there is no ambiguity about the space. If $S$ is a closed subset of $Y$ at $y\in S$, we will denote by $T_S(y)$ the Bouligand tangent cone to $S$ at $y$, i.e.
$$
T_S(y) := \left\{
v \in Y: \displaystyle \frac{  y_k -y }{\tau _k }    \to v \begin{array}{ll}
 & \mbox{ for some sequences } y_k \in S, y_k \to y     \\
  &  \mbox{ and } \tau_k >0, \tau_k \to 0
\end{array}
\right\};
$$
\noindent by $G_S(y)$ the derivable tangent cone to $S$ at $y$, i.e.
$$
G_S(y) := \left\{
v \in Y: \displaystyle \frac{   \xi(\tau_k)  -y }{\tau_k}    \to v  \begin{array}{ll}
 & \mbox{ for some vector-valued function} \\
&  \xi: [0,\varepsilon]  \to S,  \xi (0)=y
     \mbox{ and for every} \\ & \mbox{choice of a sequence } \tau_k >0, \tau_k \to 0
\end{array}
\right\};
$$
\noindent
 and by $\hat T_S(y)$ the Clarke tangent cone to $S$ at $y$, i.e.
 $$
\hat T_S(y) := \left\{
v \in Y:   \begin{array}{l }
   \mbox{ for each sequence } y_k \in S, y_k \to y  \mbox{ and }   \\
    \mbox{ for each sequence } \tau_k >0, \tau_k \to 0 \mbox{ there }\\
    \mbox{  exists a sequence } z_k \in S    \mbox{ with  } \displaystyle \frac{  z_k -y_k }{\tau _k }    \to v
\end{array}
\right\}.
$$

\textbf{Acknowledgment.} We are grateful to Prof. H. Frankowska for a fruitful discussion which has led us to the present formulation of the Lagrange multiplier rule (Theorem \ref{Lagrange}); to Prof. A. Ioffe for his useful comments and suggestions and to the unknown referee for his/her careful
reading of the manuscript and for numerous remarks that helped us to improve it.

\section{Tangential transversality}

The definition below is central for our considerations:

\begin{definition}
	Let $A$ and $B$ be closed subsets of the Banach space $X$.
	We say that $A$ and $B$ are tangentially transversal at $x_0 \in A \cap B$, if there exist $M>0$, $\delta >0$ and  $\eta >0$ such that for any
two different points $x^A \in (x_0 + \delta \bar {\mbox{\bf B}})\cap A$ and $x^B \in (x_0 + \delta \bar {\mbox{\bf B}})\cap B$, there exists a sequence $\{t_m\}$, $t_m\searrow 0$, such that for every $m \in \mathbb{N}$ there exist $w^A_m \in X$ with $\|w^A_m\|\le M$ and $x^A + t_m w^A_m \in A$, and $w^B_m \in X$ with $\|w^B_m\|\le M$, $x^B + t_m w^B_m \in B$, and the following inequality holds true $$\|x^A-x^B + t_m(w^A_m-w^B_m)\| \le \|x^A-x^B\| - t_m \eta\, .$$
\end{definition}

It is hard to find the  intuition behind this notion. Roughly speaking, the meaning  is that for each two points in a neighborhood of $x_0$ the distance between them can be decreased at a linear rate. Different sufficient conditions and examples of applications of this notion can be found in \cite{BKRstt}. Another
way of looking at this property, due Professor Alexander Ioffe, is presented in the following proposition.
It highlights the metric nature of the concept in question (a comment of Prof. A. Ioffe).

\begin{prop}
Let $A$ and $B$ be closed subsets of the Banach space $X$.
   The sets $A$ and $B$ are tangentially transversal at $x_0 \in A \cap B$ if and only if there exist $\delta >0$ and  $\zeta >0$ such that for any
two different points $x^A \in (x_0 + \delta \bar {\mbox{\bf B}})\cap A$ and $x^B \in (x_0 + \delta \bar {\mbox{\bf B}})\cap B$, there exists a sequence $\{s_m\}$, $s_m\searrow 0$, such that for every $m \in \mathbb{N}$   the following inequality holds true
$$\mathrm{dist} \, \left( \bar {\mbox{\bf{B}}}_{s_m} (x^A)\cap A, \bar {\mbox{\bf{B}}}_{s_m} (x^B)\cap B\right) \le \|x^A-x^B\| - s_m \zeta\, .$$
Here $\bar {\mbox{\bf{B}}}_{s_m} (x)$ is the set $x+s_m \bar {\mbox{\bf{B}}}$, $\mathrm{dist} \, (C,D):= \inf \{ \| x-y\| : x\in C, y\in D\}$.
\end{prop}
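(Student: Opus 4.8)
The plan is to prove the two implications separately, since the equivalence essentially identifies two ways of quantifying "the distance between nearby points of $A$ and of $B$ decreases at a linear rate." The direction from tangential transversality to the metric formulation is the straightforward one. Assume $M>0$, $\delta>0$, $\eta>0$ are as in the definition of tangential transversality, and fix two different points $x^A\in(x_0+\delta\bar{\mbox{\bf B}})\cap A$ and $x^B\in(x_0+\delta\bar{\mbox{\bf B}})\cap B$. Let $\{t_m\}$, $w^A_m$, $w^B_m$ be the objects provided by the definition. Set $s_m:=Mt_m$, so $s_m\searrow 0$; then $x^A+t_mw^A_m\in\bar{\mbox{\bf B}}_{s_m}(x^A)\cap A$ and $x^B+t_mw^B_m\in\bar{\mbox{\bf B}}_{s_m}(x^B)\cap B$, whence
$$\mathrm{dist}\,\bigl(\bar{\mbox{\bf B}}_{s_m}(x^A)\cap A,\ \bar{\mbox{\bf B}}_{s_m}(x^B)\cap B\bigr)\le\|x^A-x^B+t_m(w^A_m-w^B_m)\|\le\|x^A-x^B\|-t_m\eta.$$
Since $t_m\eta=(\eta/M)s_m$, taking $\zeta:=\eta/M$ gives exactly the required inequality with the same $\delta$. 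Note that this direction does not even use that $w^A_m,w^B_m$ are \emph{bounded} in any essential way beyond defining the scale $s_m$; boundedness is what lets us translate the $t_m$-scale into the $s_m$-scale.

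For the converse, assume $\delta>0$ and $\zeta>0$ are as in the metric formulation. Fix different $x^A\in(x_0+\delta'\bar{\mbox{\bf B}})\cap A$, $x^B\in(x_0+\delta'\bar{\mbox{\bf B}})\cap B$ for a slightly smaller radius $\delta'$ to be chosen, and let $\{s_m\}$ be the corresponding sequence. For each $m$, by definition of the infimum defining $\mathrm{dist}$, we may pick $a_m\in\bar{\mbox{\bf B}}_{s_m}(x^A)\cap A$ and $b_m\in\bar{\mbox{\bf B}}_{s_m}(x^B)\cap B$ with
$$\|a_m-b_m\|\le\|x^A-x^B\|-s_m\zeta+s_m\zeta/2=\|x^A-x^B\|-s_m\zeta/2$$
(absorbing the approximation error into half the gain). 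Now write $a_m=x^A+s_mu^A_m$ and $b_m=x^B+s_mu^B_m$ with $\|u^A_m\|\le 1$, $\|u^B_m\|\le 1$; these are exactly the increment vectors, at scale $t_m:=s_m$, with common bound $M:=1$, landing in $A$ and $B$ respectively. The displayed inequality above is then precisely $\|x^A-x^B+t_m(u^A_m-u^B_m)\|\le\|x^A-x^B\|-t_m(\zeta/2)$, so tangential transversality holds with $M=1$, the same (or shrunk) $\delta$, and $\eta=\zeta/2$.

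The only genuine subtlety — and the step I would flag as the place to be careful — is the handling of the neighborhood radius and the requirement $t_m\searrow 0$. In the metric formulation the points of $\bar{\mbox{\bf B}}_{s_m}(x^A)$ may lie slightly outside $x_0+\delta\bar{\mbox{\bf B}}$, but since $s_m\to 0$ this costs at most shrinking $\delta$ by an arbitrarily small amount; choosing $\delta'<\delta$ and discarding finitely many indices $m$ (which preserves $s_m\searrow 0$) resolves it. One should also check that the sequence $\{s_m\}$ in the metric formulation can be taken strictly decreasing to $0$: if it is merely $\searrow 0$ in the sense of "nonincreasing with limit $0$," pass to a strictly decreasing subsequence, which still satisfies all the inequalities. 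With these routine adjustments the argument is complete, and it is clean precisely because the metric and the primal formulations differ only by the cosmetic rescaling $s_m\leftrightarrow Mt_m$ together with the freedom to lose a constant factor in the linear rate.
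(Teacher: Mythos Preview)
Your proof is correct and follows essentially the same approach as the paper's: rescale via $s_m = Mt_m$, $\zeta = \eta/M$ in one direction, and in the other choose approximate realizers of the infimum defining $\mathrm{dist}$ and set $M=1$, $t_m = s_m$, losing a fixed fraction of $\zeta$ (the paper takes any $\eta<\zeta$, you take $\eta=\zeta/2$). Your caveat about shrinking $\delta$ to $\delta'$ is unnecessary, since the definition of tangential transversality only requires $x^A + t_m w^A_m \in A$ and $x^B + t_m w^B_m \in B$, not that these points remain in $x_0 + \delta\bar{\mbox{\bf B}}$; accordingly the paper keeps the same $\delta$ throughout.
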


\begin{proof}
Let $A$ and $B$ be tangentially transversal at $x_0 \in A \cap B$ with constants $M>0$, $\delta >0$ and  $\eta >0$. Put $\zeta := \eta/M$ and choose two arbitrary different points $x^A \in (x_0 + \delta \bar {\mbox{\bf B}})\cap A$ and $x^B \in (x_0 + \delta \bar {\mbox{\bf B}})\cap B$.
We set $s_m:=Mt_m$ and note that
$$\mathrm{dist} \, \left( \bar {\mbox{\bf{B}}}_{s_m} (x^A)\cap A, \bar {\mbox{\bf{B}}}_{s_m} (x^B)\cap B\right) \le $$ $$\le \left\| (x^A + t_m w^A_m)-
(x^B + t_m w^B_m)\right\| \le \|x^A-x^B\| - t_m \eta = \|x^A-x^B\| - s_m \zeta \ .$$
Let now $A$ and $B$ satisfy the metric condition in the formulation of this proposition. Choose any $\eta$ with $0<\eta <\zeta$. Then
$$\mathrm{dist} \, \left( \bar {\mbox{\bf{B}}}_{s_m} (x^A)\cap A, \bar {\mbox{\bf{B}}}_{s_m} (x^B)\cap B\right) \le \|x^A-x^B\| - s_m \zeta <\|x^A-x^B\| - s_m \eta$$
and therefore there exist $x^A + s_m w^A_m\in \bar {\mbox{\bf{B}}}_{s_m} (x^A)\cap A$  and $x^B + s_m w^B_m\in \bar {\mbox{\bf{B}}}_{s_m} (x^B)\cap B$ such that
$$\left\| (x^A + s_m w^A_m)-
(x^B + s_m w^B_m)\right\| <\|x^A-x^B\| - s_m \eta \ .$$
As $\|w_m^A\|\le 1$ and $\|w_m^B\|\le 1$, the definition of tangential transversality is satisfied with $M:=1$, $\delta >0$,  $\eta >0$ and $t_m:=s_m$.
\end{proof}

The following theorem is the main technical result to be used later on. The idea of its proof is already present in the proofs of Theorem 3.3 in \cite{KRTs} and Theorem 2.6 in \cite{KR17}.

\begin{thm}\label{ttth}
	Let the closed sets $A$ and $B$ be tangentially transversal at $x_0 \in A \cap B$ with constants $M>0$, $\delta >0$ and  $\eta >0$. Let $x^A\in A$ and $x^B\in B$ be such that
	\begin{equation} \label{init}
	\max \left\{ \|x^A-x_0\|, \|x^B-x_0\| \right\} + \frac{M}{\eta} \|x^A-x^B\| \le \delta \, .
	\end{equation}
	Then, there exists $x^{AB} \in A \cap B$ with
	$\|x^{AB} - x^A\| \le \frac{M}{\eta} \|x^A-x^B\| \mbox{ and } \|x^{AB} - x^B\| \le \frac{M}{\eta} \|x^A-x^B\| \, .$
\end{thm}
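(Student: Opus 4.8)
The plan is to build a Cauchy sequence $(x_n^A, x_n^B)$ with $x_n^A \in A$, $x_n^B \in B$, starting from $(x^A, x^B)$, along which the gap $\|x_n^A - x_n^B\|$ decreases geometrically to $0$, while both sequences stay inside the ball $x_0 + \delta\bar{\mathbf B}$ so that tangential transversality keeps applying. The common limit will be the desired point $x^{AB} \in A \cap B$, and the geometric decay of the gap will let me bound $\|x^{AB} - x^A\|$ and $\|x^{AB} - x^B\|$ by controlling the total path length.

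The construction step: given a current pair $(x_n^A, x_n^B)$ with $x_n^A \ne x_n^B$ both in $x_0 + \delta\bar{\mathbf B}$, apply the definition of tangential transversality to get a sequence $t_m \searrow 0$ and vectors $w_m^A, w_m^B$ with norms $\le M$, $x_n^A + t_m w_m^A \in A$, $x_n^B + t_m w_m^B \in B$, and $\|x_n^A - x_n^B + t_m(w_m^A - w_m^B)\| \le \|x_n^A - x_n^B\| - t_m\eta$. The delicate point is that a single such "infinitesimal" step shrinks the gap only by an amount proportional to $t_m$, which is not enough on its own; I need to iterate the small steps to make the gap drop by a fixed fraction. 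So inside stage $n$ I run an inner iteration: pick $t_m$ small, move to $(x_n^A + t_m w_m^A, x_n^B + t_m w_m^B)$, and repeat, accumulating displacement. Each inner step costs at most $Mt_m$ in each coordinate and buys at least $\eta t_m$ of gap reduction, i.e. a "cost-to-benefit ratio" of $M/\eta$. Continuing until the gap has been roughly halved, the total displacement in each coordinate over stage $n$ is at most $\frac{M}{\eta}\big(\|x_n^A - x_n^B\| - \|x_{n+1}^A - x_{n+1}^B\|\big)$; I must also check the running iterates never leave $x_0 + \delta\bar{\mathbf B}$ — this is exactly what hypothesis (\ref{init}) guarantees, since the total distance travelled from $x^A$ (resp. $x^B$) is bounded by $\frac{M}{\eta}\|x^A - x^B\|$, and the starting points are within $\delta - \frac{M}{\eta}\|x^A - x^B\|$ of $x_0$. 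Summing the telescoping bound over all stages gives $\|x_n^A - x^A\| \le \frac{M}{\eta}\|x^A - x^B\|$ for every $n$, and likewise for $B$; closedness of $A$ and $B$ plus the Cauchy property then yield the limit point $x^{AB}$ with the claimed estimates. (An equivalent and possibly cleaner packaging: show that $f(x^A, x^B) := \|x^A - x^B\|$ satisfies a descent-type inequality on $A \times B$ and invoke an Ekeland-free iteration — but since the paper stresses avoiding variational principles, the explicit sequential construction above is the natural route.)

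The main obstacle I expect is making the inner iteration rigorous: tangential transversality only promises \emph{some} sequence $t_m \searrow 0$, not an arbitrarily chosen step size, and after moving once the admissible steps at the new point are a fresh sequence. I would handle this by a careful bookkeeping argument — at each stage choose $t_m$ from the guaranteed sequence small enough that (i) the iterate stays in the ball and (ii) the accumulated gap reduction at that stage is at least half of what remains — and argue that the inner iteration terminates (or, if it does not terminate within a stage, that the inner iterates already converge to a good pair, in which case one can splice to the next stage). A clean way to organize this is to fix a summable sequence $\varepsilon_n \downarrow 0$ and insist that at stage $n$ we reduce the gap to at most half its value using steps whose total length is within $\varepsilon_n$ of the optimal ratio bound $\frac{M}{\eta}\times(\text{gap reduction})$; the extra slack $\sum \varepsilon_n$ can be absorbed if (\ref{init}) is used with a tiny margin, or eliminated by a limiting argument. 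Verifying that the ball constraint is never violated — so that the definition remains applicable throughout — is where hypothesis (\ref{init}) is used in an essential way, and getting the constants to line up there is the part that needs the most care.
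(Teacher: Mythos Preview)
Your overall strategy --- iterate the one--step estimate from tangential transversality, exploit the cost/benefit ratio $M/\eta$ between displacement and gap reduction, and use \eqref{init} to ensure the iterates never leave $x_0+\delta\bar{\mathbf B}$ --- is exactly the paper's strategy, and your identification of the key bookkeeping (S1)--(S5) in disguise is correct.

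The genuine gap is in the termination of your nested iteration. You propose outer stages indexed by $n\in\mathbb N$, each consisting of an inner iteration that either halves the gap in finitely many steps or is ``spliced'' by passing to a limit pair. But the definition gives you, at each current pair, only \emph{some} sequence $t_m\searrow 0$, with no uniform lower bound on the available step sizes. Consequently an inner iteration can accumulate only a finite total ``time'' $\sum_k t_{m_k}$ without halving the gap; you pass to a limit pair with gap still positive, restart, and the same thing can happen again --- and again after $\omega$ outer stages the limiting gap $g_\infty=\lim_n g_n$ may still be strictly positive. Your index set $\mathbb N\times\mathbb N$ (order type $\le\omega^2$) is therefore not enough: you may need to take a limit of the outer stages, then restart, and so on through arbitrary countable ordinals.

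This is precisely why the paper runs a \emph{transfinite} induction: the sequences $\{x_\alpha^A\},\{x_\alpha^B\},\{t_\alpha\}$ are indexed by ordinals, with limit ordinals handled by the Cauchy property (which follows from $t_\alpha\le \|x^A-x^B\|/\eta$ and (S4)--(S5)). Termination before $\omega_1$ is then argued via the strict monotonicity of $t_\alpha$ (the successor steps correspond to disjoint real intervals, hence there are only countably many) together with the regularity of $\omega_1$. Notably, the authors themselves list ``avoid the transfinite induction in the proof of Theorem~\ref{ttth}'' as an open problem in the Conclusion, so your hope for a purely $\mathbb N$--indexed construction, while natural, is not known to succeed.
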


\begin{proof}

	We are going to construct inductively four transfinite sequences
	indexed by ordinal numbers (cf., for example,  $\S$ 2 Ordinal
	numbers of Chapter 1 in \cite{Jech}). More precisely, we prove that
	there exist an ordinal number $\alpha_0$ and transfinite sequences
	$\{x_\alpha^A\}_{1 \le \alpha \le \alpha_0}\subset (x_0 + \delta \bar {\mbox{\bf B}})\cap A$,
	$\{x_\alpha^B\}_{1 \le \alpha \le \alpha_0}\subset (x_0 + \delta \bar {\mbox{\bf B}})\cap B$,
	$\{t_\alpha\}_{1 \le \alpha \le \alpha_0}\subset [0, +\infty)$,
	$\{h_\alpha\}_{1 \le \alpha <\alpha_0}\subset (0,+\infty)$
	such that $x_{\alpha_0}^A =x_{\alpha_0}^B$ and  for each $\alpha \in [1, \alpha_0 ]$
	we have that $\displaystyle
	t_\alpha=\sum_{1\le\beta<\alpha} h_\beta$
	and   the
	following estimates hold true for each $\beta$, $1 \le \beta \le \alpha$ and each $\gamma$, $1 \le \gamma\le \alpha$:
	
	\begin{enumerate} \item [(S1)]  $\|x_\beta^A - x_\beta^B\| \le \|x_1^A - x_1^B\|  - t_\beta \eta$ (and hence $t_\beta$  is bounded by $\frac{\|x_1^A - x_1^B\|}{\eta}$);\\
		\item [(S2)]  $ \|x_\beta^A     - x_0\| \le \|x_1^A     -  x_0\|+t_\beta M$;\\
		\item [(S3)]  $ \|x_\beta^B- x_0\| \le \|x_1^B      -  x_0\|+t_\beta M$;\\
		\item [(S4)]  $\|x_\beta^A - x_\gamma^A\| \le M \left(t_\beta - t_\gamma\right)$;\\
		\item [(S5)]  $\|x_\beta^B - x_\gamma^B\| \le M \left(t_\beta - t_\gamma\right)$.\\
	\end{enumerate}

	We implement our construction using induction on $\alpha$.
We start with  $x_1^A := x^A\in (x_0 + \delta \bar {\mbox{\bf B}})\cap A$, $x_1^B :=x^B\in (x_0 + \delta \bar {\mbox{\bf B}})\cap B$ and $t_1= 0$.
If $x^A_1=x _1^B$, we set $\alpha_0:=1$ and terminate the process.
	If $x^A_1\not=x _1^B$, we set $h_1$ to be equal to the first element of the sequense $\{t_m\}$ from the definition of tangential transversality.
	It is straightforward to verify     the induction
	assumptions (S1)-(S5)   for $\beta = 1$ and $\gamma = 1$.
	
	Assume that $x_\beta^A\in (x_0 + \delta \bar {\mbox{\bf B}})\cap A$, $x_\beta^B\in (x_0 + \delta \bar {\mbox{\bf B}})\cap B$, $h_\beta >0$ and
	$t_\beta = \sum_{\gamma<\beta} h_\gamma>0$ are constructed and (S1)-(S5) are true for all
	ordinals $\beta$  less than $\alpha$ and the process has not been terminated.
	
	Let us first consider the case when $\alpha$  is a non limit
	ordinal number, i.e. $\alpha = \beta+ 1$. As $\beta <\alpha_0$
	(the process has not been terminated), we have $\|x_\beta^A -
	x_\beta^B \|\not = 0$. Then we set
	$
	h_\beta \in (0, \|x_\beta^A -x_\beta^B\| ]
	$
	to be equal to $t_m$ for some $m$, where the sequence $\{t_m\}$ is from the definition of tangential transversality (it is possible, because $t_m \searrow 0$).
	Then, using again the definition of tangential transversality, there exist $w^A_\beta \in X$ with $\|w^A_\beta\|\le M$ and $w^B_\beta \in X$ with $\|w^B_\beta\|\le M$
	such that
	$$x_\alpha^A := x_\beta^A + h_\beta w^A_\beta \in A \ , $$
	$$x_\alpha^B := x_\beta^B + h_\beta w^B_\beta \in B$$
	and
	\begin{align*}
	\|x_\alpha^A -x_\alpha^B \| &= \|x_\beta^A-x_\beta^B + h_\beta(w^A_\beta-w^B_\beta)\| \le \|x_\beta^A-x_\beta^B\| - h_\beta \eta \\
	&\le \|x_1^A-x_1^B\| -(t_\beta+ h_\beta) \eta  \, .
	\end{align*}
	Setting $t_\alpha:= t_\beta + h_\beta $, we have
	\begin{align*}
	\|x_\alpha^A -x_\alpha^B \| \le  \|x_1^A-x_1^B\| - t_\alpha \eta \, .
	\end{align*}
	Therefore, (S1) is verified for $\alpha$.

	(S2) yields
	\begin{align*}\|x_\alpha^A - x_0\| \le&  \|x_\beta^A - x_0\| +h_\beta \|w^A_\beta\| \\
	\leq &  \|x_1^A - x_0\| +t_\beta M + h_\beta M = \|x_1^A - x_0\| +t_\alpha M \ .
	\end{align*}

	Analogously, using (S3) instead of (S2), we obtain
	$$\|x_\alpha^B -    x_0\| <  \|x_1^A - x_0\| +t_\alpha M \ .$$
	
	Using the estimate for $\|x_1^A-x_0\|$ from \eqref{init} and that $t_\beta \le \frac{\|x_1^A - x_1^B\|}{\eta}$, we obtain
	\begin{align*}\|x_\alpha^A - x_0\| \le&   \|x_1^A - x_0\| +t_\alpha M \le \delta - \frac{M}{\eta}\|x_1^A - x_1^B\| + \frac{\|x_1^A - x_1^B\|}{\eta} M = \delta \ ,
	\end{align*}
	and similarly
	$$ \|x_\alpha^A - x_0\| \le \delta \ .$$
	
	Now  let $\gamma<\alpha$. Then
	\begin{align*}
	\|x_\alpha^A& - x_\gamma^A\| = \|x_\beta^A-x_\gamma^A+
	h_\beta v_\beta^A \|\le \|x_\beta^A-x_\gamma^A\| +
	h_\beta\| v_\beta^A \| \\
	&\le M(t_\beta-t_\gamma)+M(t_\alpha-t_\beta)=M(t_\alpha-t_\gamma)
	\end{align*}
	and in the same way
	$$ \|x_\alpha^B - x_\gamma^B \| \le M(t_\alpha-t_\gamma) \, . $$
	
	We have verified the inductive assumptions (S1)-(S5) for the case of a non limit ordinal number $\alpha$.
If $x^A_\alpha=x _\alpha^B$, we set $\alpha_0:=\alpha$ and terminate the process.
	
	We next consider the case when $\alpha$ is a limit  ordinal
	number. Let $\beta<\alpha$ be arbitrary. Then $\beta+1<\alpha$
	too. Since the transfinite process has not stopped at   $\beta+1$,
	then $\|x_\beta^B - x_\beta^A\|>0$, and hence taking into account
	(S1) we obtain that
	$$
	t_\beta< \frac{\|x_1^A - x_1^B\|}{\eta} \ .
	$$
	Hence the increasing transfinite sequence
	$\{t_\beta\}_{1\le\beta<\alpha}$ is bounded, and so it is
	convergent. We denote $t_\alpha:=lim_{\beta \to \alpha} t_\beta= lim_{\beta \to \alpha} \sum_{\gamma < \beta} h_\gamma= \sum_{\gamma < \alpha} h_\gamma$. Since $\|x_\beta^A - x_\gamma^A \| \le
	(t_\beta-t_\gamma)M$, the transfinite sequence
	$\{x_\beta^A\}_{1\le\beta<\alpha}$ is fundamental. Hence there
	exists $x_\alpha^A$ so that $\{x_\beta^A\}_{1\le\beta<\alpha}$ tends
	to $x_\alpha^A$ as $\beta$ tends to $ \alpha$ with $\beta<\alpha$. In the same way one can
	prove the existence of $x_\alpha^B$ so that the transfinite
	sequence $\{x_\beta^B\}_{1\le\beta<\alpha}$ tends to $x_\alpha^B$ as
	$\beta$ tends to $\alpha$. To verify the inductive assumptions for
	$\alpha$, one can just take a limit for $\beta$ tending to $\alpha$ with $\beta<\alpha$ in
	the same assumptions written for each  $\beta <\alpha$. If $x^A_\alpha=x _\alpha^B$, we set $\alpha_0:=\alpha$ and terminate the process.
	
	We have   constructed inductively the transfinite sequences $$\{ x_\beta^A \}_{\beta \le \alpha}\subset A,
	\ \{ x_\beta^B \}_{\beta \le \alpha}\subset B$$ and $\{ t_\beta\}_{\beta
		\le \alpha}\subset [0,+\infty)$. The process   terminates when
	$x_\alpha^A=x_\alpha^B$ for some $\alpha$. Since
	$$
	\|x_\alpha^A - x_\alpha^B \| \le \|x_1^A - x_1^B\|  - t_\beta \eta
	$$
	and the transfinite sequence $t_\alpha$ is strictly increasing, the  equality
	$x_\alpha^A=x_\alpha^B$ will be satisfied for some
	$\alpha=\alpha_0$ strictly preceding the first uncountable ordinal
	number. Indeed, the successor ordinals indexing the
	so constructed transfinite sequences form a countable set (because to every successor
	ordinal $\alpha+1$ corresponds the open interval $(t_\alpha, t_\alpha+h_\alpha)\subset \mathbb{R}$,
	these intervals are disjoint and the rational numbers are countably many and
	dense in $\mathbb{R}$). Therefore, $\alpha_0$ is countable accessible.
	On the other hand, according to the Corollary after Lemma 5.1 on page 40
	of \cite{Jech},  $\aleph_{\gamma+1} $ is a regular cardinal
	(under the assumption of the Axiom of choice) for every
	$\gamma$, in particular the first uncountable cardinal $\aleph_1$ is not countably accessible.
	Thus $\omega_1$ is not countably accessible (as $\omega_1$     is the first ordinal
	with $|\omega_1|=\aleph_1$).
	Hence our inductive process must terminate before $\omega_1$.

	Then $x^{AB}:=x_{\alpha_0}^A=x_{\alpha_0}^B \in A \cap B$ and because of (S1)
	we have that
	$$t_{\alpha_0} \le \frac{\|x_1^A - x_1^B\|}{\eta} \, .$$
	Applying (S4)  we obtain
	\begin{align*}
	\|x^{AB}& - x_1^A\| \leq M(t_{\alpha_0}-t_1) \leq  \frac{M}{\eta} \|x^A-x^B\| \, .
	\end{align*}
	Analogously, due to (S5),
	$$  \|x^{AB} - x_1^B\| \leq  \frac{M}{\eta} \|x^A-x^B\| \, .$$
	
	This completes the proof.
\end{proof}

\begin{prop}
  \label{cond}
	If the sets $A$ and $B$ are tangentially transversal at $x_0 \in A \cap B$ with constants
	$M>0$, $\delta >0$ and  $\eta >0$, then we have that \eqref{init} holds true for all
	$x^A\in A$, $\|x^A-x_0\| \le \zeta$ and  $x^B\in B$, $\|x^B-x_0\| \le \zeta$, where
	$\zeta :=\displaystyle\frac{\delta}{1+2\frac{M}{\eta}}$.
\end{prop}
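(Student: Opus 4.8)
The plan is to reduce the asserted inequality to two completely elementary bounds coming from the triangle inequality, combined with the specific value of the constant $\zeta$. So let $x^A\in A$ with $\|x^A-x_0\|\le\zeta$ and $x^B\in B$ with $\|x^B-x_0\|\le\zeta$ be given. The first step is to observe that, since both norms $\|x^A-x_0\|$ and $\|x^B-x_0\|$ are bounded by $\zeta$, we trivially have $\max\{\|x^A-x_0\|,\|x^B-x_0\|\}\le\zeta$. The second step is the triangle inequality: $\|x^A-x^B\|\le\|x^A-x_0\|+\|x_0-x^B\|\le 2\zeta$.

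The third step is to put these two estimates together and use the definition of $\zeta$. The left-hand side of \eqref{init} is then at most $\zeta+\frac{M}{\eta}\cdot 2\zeta=\zeta\bigl(1+2\tfrac{M}{\eta}\bigr)$, and substituting $\zeta=\delta/\bigl(1+2\tfrac{M}{\eta}\bigr)$ turns this upper bound into exactly $\delta$. Hence \eqref{init} holds, which is what had to be shown. I would note explicitly that tangential transversality is invoked only to supply the positive constants $M,\delta,\eta$; the argument itself is purely a bookkeeping computation with $\zeta$, chosen precisely so that Theorem \ref{ttth} becomes applicable to any pair of points in the $\zeta$-ball about $x_0$.

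There is no genuine obstacle here. The only point requiring a word of care is that $\frac{M}{\eta}>0$ (immediate from $M>0$ and $\eta>0$), so that $\zeta$ is a well-defined positive number, the inclusion of the $\zeta$-balls into the $\delta$-ball is meaningful, and each inequality above is an honest estimate rather than a vacuous one.
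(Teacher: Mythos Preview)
Your proof is correct and follows essentially the same route as the paper's own argument: bound the maximum by $\zeta$, bound $\|x^A-x^B\|$ by $2\zeta$ via the triangle inequality, and substitute the definition of $\zeta$.
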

\begin{proof}
  Indeed,
	\begin{align*}
	\max& \left\{ \|x^A-x_0\|, \|x^B-x_0\| \right\} + \frac{M}{\eta} \|x^A-x^B\| \\
	&\le \zeta + \frac{M}{\eta} \left(  \|x^A-x_0\| + \|x_0-x^B\| \right)
	\le \zeta + \frac{M}{\eta}2\zeta = \delta \, .
	\end{align*}
\end{proof}

We are going to show that transversality implies tangential transversality, which implies subtransversality due to the above theorem. The definitions below are taken from the recent book \cite{IoffeBook}.

\begin{definition}
	Let $A$ and $B$ be closed subsets of the Banach space $X$.
	$A$ and $B$ are said to be transversal at $x_0 \in A \cap B$, if there exist $\delta >0$ and $K>0$, such that
	$$d(x, (A-a)\cap(B-b)) \leq K(d(x, A-a) + d(x, B-b)) $$
	for all $x \in x_0 + \delta \bar {\mbox{\bf B}} $ and $a$ and $b$ close enough to the origin.
\end{definition}

\begin{definition}
	Let $A$ and $B$ be closed subsets of the Banach space $X$.
	$A$ and $B$ are said to be subtransversal at $x_0 \in A \cap B$, if there exist $\delta >0$ and $K>0$, such that
	$$d(x, A\cap B) \leq K(d(x, A) + d(x, B)) $$
	for all $x \in x_0 + \delta \bar {\mbox{\bf B}} $.
\end{definition}

\begin{prop}
	Let the closed sets $A$ and $B$ be transversal at $x_0 \in A \cap B$. Then, $A$ and $B$ are tangentially transversal at $x_0$.
\end{prop}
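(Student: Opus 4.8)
The plan is to read off tangential transversality directly from the metric transversality estimate, applied at the single point $x=x^A$ with one cleverly chosen shift of $B$. Fix the constants $\delta>0$, $K>0$ from the definition of transversality and a $\delta_1>0$ such that $d(x,(A-a)\cap(B-b))\le K(d(x,A-a)+d(x,B-b))$ holds whenever $\|x-x_0\|\le\delta$ and $\|a\|,\|b\|\le\delta_1$. I would show that $A$ and $B$ are tangentially transversal at $x_0$ with $\delta':=\tfrac12\min\{\delta,\delta_1\}$, $M:=K+2$ and $\eta:=\tfrac12$.

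So let $x^A\in (x_0+\delta'\bar{\mbox{\bf B}})\cap A$ and $x^B\in (x_0+\delta'\bar{\mbox{\bf B}})\cap B$ be distinct; put $r:=\|x^A-x^B\|>0$, $u:=(x^B-x^A)/r$, and take $\{t_m\}$ with $0<t_m<r$ and $t_m\searrow 0$ (for instance $t_m:=r/(m+1)$). Fixing $m$ and writing $t:=t_m$, I would apply the transversality estimate with $x:=x^A$, $a:=0$ and
$$b:=x^B-x^A-tu=\tfrac{r-t}{r}\,(x^B-x^A),$$
which is admissible since $\|x^A-x_0\|\le\delta'\le\delta$, $a=0$, and $\|b\|=r-t<r\le\delta_1$. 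As $x^A\in A$ we have $d(x^A,A-a)=0$; and as $x^A+b=x^B-tu$ lies within distance $t$ of $x^B\in B$, we have $d(x^A,B-b)\le t$. Hence $d(x^A,(A-a)\cap(B-b))\le Kt$, so there is $y\in A\cap(B-b)$ with $\|y-x^A\|<(K+1)t$.

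Setting $w^A_m:=(y-x^A)/t$ and $w^B_m:=(y+b-x^B)/t$ then gives $x^A+t w^A_m=y\in A$, $x^B+t w^B_m=y+b\in B$, $\|w^A_m\|<K+1\le M$, and — using $y+b-x^B=(y-x^A)-tu$ — also $\|w^B_m\|\le(\|y-x^A\|+t)/t<K+2=M$. Finally $(x^A+t w^A_m)-(x^B+t w^B_m)=-b=(t-r)u$, whose norm is $r-t\le r-t\eta=\|x^A-x^B\|-t\eta$, which is exactly the inequality required by the definition. The only genuinely non-routine step — really the whole idea — is the choice of the shift $b$: it is precisely the vector that simultaneously brings a point of $B-b$ within $O(t)$ of $x^A$ (so transversality delivers nearby points $y\in A$ and $y+b\in B$ at distance $O(t)$ from $x^A$ and $x^B$) and forces the difference of those two new points, which equals $-b$, to be the original difference $x^A-x^B$ scaled by the factor $(r-t)/r<1$. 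Everything else is bookkeeping with $M$ and $\eta$; I would expect no real obstacle beyond spotting this shift.
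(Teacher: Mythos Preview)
Your proof is correct. It differs from the paper's in the starting point: the paper does not work directly from the metric definition of transversality but instead invokes an equivalent characterization due to Kruger--Thao (Theorem~3.1(iii) in \cite{Kru15}), namely that $(A-x^A-\rho w_1)\cap(B-x^B-\rho w_2)\cap\rho\bar{\mbox{\bf B}}\neq\emptyset$ for all small $\rho$ and $w_i\in\alpha\bar{\mbox{\bf B}}$. With that formulation in hand, the paper simply takes $w_1=\alpha(x^B-x^A)/\|x^B-x^A\|$, $w_2=\mathbf 0$, picks the common point $\rho u$, and reads off $w^A_m=w_1+u$, $w^B_m=u$; the required inequality then drops out exactly, with $\eta=\alpha$ and $M=\alpha+1$.

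Your argument achieves the same thing --- forcing the new difference $(x^A+tw^A_m)-(x^B+tw^B_m)$ to be a sub-unit scalar multiple of $x^A-x^B$ --- but by choosing the single shift $b=(1-t/r)(x^B-x^A)$ in the metric inequality rather than appealing to the Kruger--Thao equivalence. The advantage of your route is self-containment: you never leave the definition actually stated in the paper. The paper's route is marginally cleaner because the Kruger--Thao form already hands you the common point with a uniform $\rho$-bound, so no extra slack (your passage from $Kt$ to $(K+1)t$) is needed. Both arguments encode the same geometric idea: the shift freedom in transversality lets one slide a point of $B$ toward $x^A$ along the segment $[x^A,x^B]$ while keeping a companion point of $A$ nearby.
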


\begin{proof}
	In the proof we are going to use the equivalent definition of transversality given in \cite{Kru15} (cf. Definition 3.1 (iii) and Theorem 3.1 (iii) in \cite{Kru15}):\\
	$A$ and $B$ are transversal at $x_0 \in A \cap B$, if and only if there exist $\alpha>0$ and $\delta >0$ such that
	\begin{equation}\label{kru}
	(A-x^A-\rho w_1) \cap (B-x^B-\rho w_2) \cap \rho \bar {\mbox{\bf B}} \neq \emptyset
	\end{equation}
	for all $\rho \in (0, \delta)$, $w_i \in \alpha \bar {\mbox{\bf B}}, \, i=1,2$,  $x^A \in (x_0 + \delta \bar {\mbox{\bf B}})\cap A$ and $x^B \in (x_0 + \delta \bar {\mbox{\bf B}})\cap B$.
	
	We will show that $A$ and $B$ are tangentially transversal at $x_0$ with constants $M:= \alpha +1$, $\delta$ and  $\eta:= {\alpha}$.
	
	Let us fix $x^A \in A\cap (x_0 + \delta \bar {\mbox{\bf B}})$, $x^B \in B\cap (x_0 + \delta \bar {\mbox{\bf B}})$ with $x^A\not = x_B$ and
let $t_m \in (0, \min \{\delta, \frac{\|x^B-x^A\|}{\alpha} \})$. We put $w_1:= \alpha \frac{x^B-x^A}{\|x^B-x^A\|} \in \alpha \bar {\mbox{\bf B}} $ and $w_2 := \mathbf{0}$. Then, \eqref{kru} (with $\rho:=t_m$) is equivalent to the existence of $u \in \bar {\mbox{\bf B}}$ such that
	$$t_m u \in (A-x^A-t_m w_1)\cap(B-x^B-t_m w_2) \, . $$
	The last inclusion implies that
	$$x^A+t_m w^A_m \in A \mbox{ and } x^B+t_m w^B_m \in B \, , $$
	where $w^A_m := w_1 + u$ and $w^B_m := w_2 + u$. We also have that
	$\|w^A_m\| \le \alpha+1 = M$ and $\|w^B_m\| \le 1\le  M$.
	
	We estimate
	\begin{align*}
	\|x^A&-x^B + t_m(w^A_m-w^B_m)\| = \Big\|x^A-x^B + t_m \alpha \frac{x^B-x^A}{\|x^B-x^A\|}\Big\| \\
	&=  \|x^A-x^B\|\left|1-\frac{t_m \alpha}{\|x^B-x^A\|}\right| = \|x^A-x^B\| - t_m \eta \, .
	\end{align*}
	This proves the tangential transversality.
	
\end{proof}

\begin{prop}\label{subtr}
	Let the closed sets $A$ and $B$ be tangentially transversal at $x_0 \in A \cap B$. Then, $A$ and $B$ are subtransversal at $x_0$.
\end{prop}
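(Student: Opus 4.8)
The plan is to read off subtransversality constants directly from Proposition~\ref{cond} and Theorem~\ref{ttth}. Let $M>0$, $\delta>0$, $\eta>0$ be constants witnessing tangential transversality of $A$ and $B$ at $x_0$, and let $\zeta:=\delta/(1+2\tfrac{M}{\eta})$ be the constant from Proposition~\ref{cond}. I would show that subtransversality holds at $x_0$ with $\delta':=\zeta/4$ and $K:=1+2\tfrac{M}{\eta}$.

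Fix $x\in x_0+\delta'\bar{\mbox{\bf B}}$; one may assume $x\notin A\cap B$, for otherwise $d(x,A\cap B)=0$ and there is nothing to prove. Given an arbitrary $\varepsilon\in(0,\zeta/2]$, the first step is to choose $x^A\in A$ and $x^B\in B$ with $\|x-x^A\|\le d(x,A)+\varepsilon$ and $\|x-x^B\|\le d(x,B)+\varepsilon$ (possible since $A$ and $B$ are closed, even when the infima are not attained). Since $x_0\in A\cap B$ we have $d(x,A)\le\|x-x_0\|\le\delta'$ and likewise $d(x,B)\le\delta'$, hence
$$\|x^A-x_0\|\le\|x^A-x\|+\|x-x_0\|\le(\delta'+\varepsilon)+\delta'\le 2\delta'+\tfrac{\zeta}{2}=\zeta,$$
and in the same way $\|x^B-x_0\|\le\zeta$. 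This bookkeeping is the step requiring care, because it is exactly what makes Proposition~\ref{cond} (equivalently, hypothesis \eqref{init} of Theorem~\ref{ttth}) applicable to the pair $x^A,x^B$.

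Next, if $x^A=x^B$ I would set $x^{AB}:=x^A=x^B\in A\cap B$, which trivially satisfies $\|x^{AB}-x^A\|=0\le\tfrac{M}{\eta}\|x^A-x^B\|$; if $x^A\ne x^B$, then Proposition~\ref{cond} shows that $x^A,x^B$ satisfy \eqref{init}, so Theorem~\ref{ttth} produces $x^{AB}\in A\cap B$ with $\|x^{AB}-x^A\|\le\tfrac{M}{\eta}\|x^A-x^B\|$. In either case, using $\|x^A-x^B\|\le\|x-x^A\|+\|x-x^B\|$, the triangle inequality gives
\begin{align*}
d(x,A\cap B)&\le\|x-x^{AB}\|\le\|x-x^A\|+\|x^{AB}-x^A\|\\
&\le\|x-x^A\|+\tfrac{M}{\eta}\bigl(\|x-x^A\|+\|x-x^B\|\bigr)\\
&\le\bigl(1+\tfrac{M}{\eta}\bigr)\bigl(d(x,A)+\varepsilon\bigr)+\tfrac{M}{\eta}\bigl(d(x,B)+\varepsilon\bigr)\\
&\le K\bigl(d(x,A)+d(x,B)\bigr)+K\varepsilon.
\end{align*}
Letting $\varepsilon\to0$ yields $d(x,A\cap B)\le K\bigl(d(x,A)+d(x,B)\bigr)$ for every $x\in x_0+\delta'\bar{\mbox{\bf B}}$, which is subtransversality. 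The whole argument is thus a one-shot application of Theorem~\ref{ttth}; I do not anticipate any genuine obstacle beyond the elementary estimates above.
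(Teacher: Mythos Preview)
Your proof is correct and follows essentially the same route as the paper: pick near-best approximants $x^A\in A$, $x^B\in B$, verify the hypothesis~\eqref{init} of Theorem~\ref{ttth} (you do this via Proposition~\ref{cond}, the paper verifies it directly), apply Theorem~\ref{ttth} to obtain $x^{AB}\in A\cap B$, and then chain triangle inequalities and let $\varepsilon\to0$. The only differences are cosmetic bookkeeping in the constants (the paper obtains the slightly sharper $K=1+\tfrac{M}{\eta}$, and a different radius), and your explicit handling of the case $x^A=x^B$ is harmless but unnecessary since Theorem~\ref{ttth} already covers it.
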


\begin{proof}
	Let the constants $M>0$, $\delta >0$ and  $\eta >0$ be from the definition of tangential transversality.
	
	Let us set $\zeta
	:=\displaystyle\frac{\delta}{2\left(1+ 2\frac{M}{\eta}\right)}\in (0, \delta)$.
	Let $x$ be an arbitrary element of $x_0+ \zeta {\mbox{\bf B}}$.
	Let us fix an arbitrary $\varepsilon \in (0, \zeta - \|x-x_0\|)$.
	We have that there exist $x^A \in  A$ and $x^B \in B$ such that
	\begin{equation}\label{m1}\|x^A-x\| < d(x, A) + \varepsilon \mbox{ and } \|x^B-x\| <
	d(x, B) + \varepsilon \  . \end{equation} Since $d(x, A)  \le
	\|x-x_0\|$, we obtain that
	$$ \|x^A-x\| < \|x_0-x\| + \varepsilon < \zeta $$
	and therefore $x^A \in (x_0+2\zeta {\mbox{\bf B}})\cap A$. Analogously, $x^B \in (x_0+2\zeta {\mbox{\bf B}})\cap B$.
	
	We have that
	$$      \max \left\{ \|x^A-x_0\|, \|x^B-x_0\| \right\} + \frac{M}{\eta} \|x^A-x^B\| < 2\zeta + 4\frac{M}{\eta}\zeta = \delta \, . $$
	We can apply Theorem \ref{ttth} and obtain $x^{AB} \in A \cap B$ with
	\begin{equation}\label{m2}\|x^{AB} - x^A\| \le \frac{M}{\eta} \|x^A-x^B\|
	\mbox{ and } \|x^{AB} - x^B\| \le \frac{M}{\eta} \|x^A-x^B\|
	.\end{equation}
	
	Applying (\ref{m1}) and (\ref{m2}), we obtain
	\begin{align*}
	d(x, A\cap B) &\le \|x-x^{AB}\| \le \|x-x^A\| + \|x^A-x^{AB}\| \\
	<& d(x, A) + \varepsilon + \frac{M}{\eta} \|x^A-x^B\| \le d(x, A) + \varepsilon + \frac{M}{\eta}(\|x^A-x\|+\|x-x^B\|) \\
	<&  d(x, A) + \varepsilon + \frac{M}{\eta}\left(d(x, A) + \varepsilon + d(x, B) + \varepsilon\right)\\
	\le& \left(1+\frac{M}{\eta}\right)(d(x, A)  + d(x, B)) + \varepsilon\left(1+ 2 \frac{M}{\eta} \right) \ .
	\end{align*}
	
	Letting $\varepsilon$ go to $0$ proves the subtransversality with constants $\zeta>0$ and $K := 1+\frac{M}{\eta} >0$.
\end{proof}

\section{A Lagrange multiplier rule}

It is our understanding that the following result is crucial for obtaining necessary optimality conditions.

\begin{prop}[Nonseparation result]\label{nonsep}
	Let $A$ and $B$ be closed subsets of the Banach space $X$. Let $A$ and $B$ be subtransversal at $x_0 \in A \cap B$ with constants $\delta>0$ and  $K >0$. Let  there exist $v^A$ with unit norm which belongs to the Bouligand tangent cone to $A$ at $x_0$, $v^B$  with unit norm which belongs to the derivable  tangent cone to $B$ at $x_0$ and let $\|v^A-v^B\|< \frac{1}{K}$. Then $A$ and $B$ cannot be locally  separated at $x_0$.
\end{prop}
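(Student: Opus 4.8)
The plan is to argue by contradiction. Suppose $A$ and $B$ \emph{are} locally separated at $x_0$, so that there is $\rho>0$ with $(x_0+\rho\bar{\mbox{\bf B}})\cap A\cap B=\{x_0\}$; the goal is to deduce $\|v^A-v^B\|\ge 1/K$, contradicting the hypothesis. Under this assumption one first records an elementary remark: if $\|x-x_0\|<\rho/2$, then $d(x,A\cap B)=\|x-x_0\|$. Indeed, any $y\in A\cap B$ with $\|y-x_0\|\le\rho$ must equal $x_0$, while any $y\in A\cap B$ with $\|y-x_0\|>\rho$ satisfies $\|x-y\|\ge\|y-x_0\|-\|x-x_0\|>\rho/2>\|x-x_0\|$; hence the infimum defining $d(x,A\cap B)$ is attained at $x_0$.

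Next I would unpack the two tangency hypotheses. Since $v^A\in T_A(x_0)$, there are $s_k\to 0^+$ and $a_k\in A$ with $a_k=x_0+s_k v^A+s_k r_k$ and $r_k\to\mathbf{0}$. Since $v^B\in G_B(x_0)$, there is a map $\xi:[0,\varepsilon]\to B$ with $\xi(0)=x_0$ such that $\xi(t)=x_0+tv^B+t\sigma(t)$ with $\sigma(t)\to\mathbf{0}$ as $t\to 0^+$ --- this is precisely the content of the ``for every sequence $\tau_k\searrow 0$'' clause in the definition of the derivable cone. The key point is that $\xi$ is defined for \emph{all} small $t$, so it may be evaluated at $t=s_k$, the scale dictated by the Bouligand sequence for $A$; this is why the argument needs the derivable cone, and not merely the Bouligand cone, for $B$: two Bouligand sequences for $A$ and $B$ would in general live on incompatible scales.

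Finally, put $x_k:=\xi(s_k)\in B$ for $k$ large enough that $s_k<\varepsilon$, and set $\varepsilon_k:=\|\sigma(s_k)\|+\|r_k\|\to 0$. Then $x_k-a_k=s_k(v^B-v^A)+s_k(\sigma(s_k)-r_k)$, so $d(x_k,A)\le\|x_k-a_k\|\le s_k(\|v^A-v^B\|+\varepsilon_k)$, while $d(x_k,B)=0$ and $\|x_k-x_0\|\ge s_k(\|v^B\|-\|\sigma(s_k)\|)\ge s_k(1-\varepsilon_k)$ since $\|v^B\|=1$. For $k$ large one also has $\|x_k-x_0\|\le\min\{\delta,\rho/2\}$, so combining the reduction above with subtransversality of $A$ and $B$ at $x_0$ yields
$$ s_k(1-\varepsilon_k)\le\|x_k-x_0\|=d(x_k,A\cap B)\le K\bigl(d(x_k,A)+d(x_k,B)\bigr)\le Ks_k\bigl(\|v^A-v^B\|+\varepsilon_k\bigr). $$
Dividing by $s_k>0$ and letting $k\to\infty$ gives $1\le K\|v^A-v^B\|$, the desired contradiction. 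I do not expect any genuinely hard step; the points needing care are the bookkeeping of the remainders $r_k,\sigma(s_k)$ in the limit and the conceptual observation above about why $G_B(x_0)$, rather than $T_B(x_0)$, must be used.
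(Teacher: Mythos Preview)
Your proof is correct and rests on the same core computation as the paper's: evaluate the derivable curve for $B$ at the scales $s_k$ dictated by the Bouligand sequence for $A$, estimate $\|x_k-a_k\|\le s_k(\|v^A-v^B\|+o(1))$, and feed this into the subtransversality inequality. The packaging, however, is different. The paper argues directly rather than by contradiction: it applies subtransversality at the $A$-point $x_m^A$, extracts an actual point $x_m^{AB}\in A\cap B$ with $\|x_m^{AB}-x_m^A\|\le(K+\varepsilon/2)\|x_m^A-x_m^B\|<\|x_m^A-x_0\|$, and then observes that this forces $x_m^{AB}\neq x_0$ while $x_m^{AB}\to x_0$. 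Your contrapositive route replaces that construction with the observation that local separation pins down $d(x,A\cap B)=\|x-x_0\|$ near $x_0$, and you apply subtransversality at the $B$-point $x_k$ (exploiting $d(x_k,B)=0$). Your version is a little leaner, since it avoids choosing a near-minimizer $x^{AB}_m$ and the extra $\varepsilon$-bookkeeping; the paper's version has the mild advantage of exhibiting the nonseparating sequence explicitly.
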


\begin{proof}
	Since  $v^A$ belongs to the Bouligand tangent cone to $A$ at $x_0$, we have that there exist sequences $t_m \searrow 0$ and $v_m^A \to v^A$ such that
	$$x_m^A := x_0 +t_m v_m^A \in A \, . $$
	Since $v^B$  belongs to the derivable  tangent cone to $B$ at $x_0$, we have that for all small enough $t>0$ there exists $v_t^B \in X$, such that $x_0 +t v_t^B \in B$ and $v_t^B \to v^B$ as ${t\searrow 0}$. Let us set
	$$x_m^B := x_0 +t_m v_m^B \in B$$
for $m$ -- large enough. We wrote $v_m^B$ instead of $v_{t_m}^B$ for the sake of simplicity.
	
	From the triangle inequality we obtain that $\|v_m^A\|\ge \|v^A\|-\|v_m^A-v^A\| = 1 - \|v_m^A-v^A\|$ and therefore
for $ m $   large enough we have
	$$t_m = \frac{\|x_m^A-x_0\|}{\|v_m^A\|} \le  \frac{\|x_m^A-x_0\|}{1 - \|v_m^A-v^A\|} \, $$
	and
	\begin{align*}
	\|x_m^A-x_m^B\| = t_m \|v_m^A-v_m^B\| \le \frac{\|x_m^A-x_0\|}{1 - \|v_m^A-v^A\|} \left( \|v_m^A-v^A\|+ \|v^A-v^B\| + \|v^B-v_m^B\|\right) \, .
	\end{align*}
	
	We have that $$\frac{\|v_m^A-v^A\|+ \|v^A-v^B\| + \|v^B-v_m^B\|}{1 - \|v_m^A-v^A\|} \to_{m \to +\infty} \|v^A-v^B\|$$ and $\|v^A-v^B\|< \frac{1}{K+\varepsilon}$ for some small enough $\varepsilon>0$. Therefore there exists $m_0\in \mathbb{N}$ such that
	\begin{equation}\label{est}
	\|x_m^A-x_m^B\| \le \|x_m^A-x_0\| \cdot \frac{1}{K+\varepsilon}
	\end{equation}
	for all $m\ge m_0$.
	
	Let $m_1\ge m_0$ be such that $t_{m}\|v_m^A\| \le \delta$ and $t_{m}\|v_m^B\| \le \delta$ whenever $m\ge m_1$. Then, for  $m\ge m_1$ we have
	$$d\left( x^A_m, A\cap B\right) \le K\left( d\left( x^A_m, A\right)+d\left( x^A_m, B\right)\right) \le K\cdot d\left( x^A_m, x^B_m\right) = K \left\| x^A_m -x^B_m\right\|\, . $$
	From the definition of a distance from a point to a set there exists $x^{AB}_m \in A \cap B$ with
	$$\left\|x^{AB}_m - x^A_m\right\| \le d\left( x^A_m, A\cap B\right) + \frac{\varepsilon}{2}\left\| x^A_m -x^B_m\right\| \ .$$
Note that if $x^A_m=x^B_m$ we just put $x^{AB}_m$ to coincide with these points and all addends are zero.
Then
$$\left\|x^{AB}_m - x^A_m\right\| \le K \left\| x^A_m -x^B_m\right\| + \frac{\varepsilon}{2}\left\| x^A_m -x^B_m\right\| \le  \left\|x_m^A-x_0\right\| \frac{K+\varepsilon/2}{K+\varepsilon} < \left\|x_m^A-x_0\right\|  \, $$
	using \eqref{est}.
	Therefore $x^{AB}_m \neq x_0$. Moreover,
	\begin{align*}
	& \|     x^{AB}_m - x_0 \|\le  \|     x^{A }_m - x_0 \| + \|   x^A_m - x^{AB}_m \| \le \\ & \le 2 \|     x^{A }_m - x_0 \| \le 2   t_m \|v_m^A\|
	    \longrightarrow_{m\to +\infty} 0 \, .
	\end{align*}
	Thus, $x^{AB}_m \to x_0$ and $A$ and $B$ cannot be locally  separated at $x_0$.
\end{proof}

We will apply the above nonseparation result to obtain an abstract Lagrange multiplier rule. Let $X$ be a Banach space. We consider $X\times \mathbb{R}$ equipped with the uniform norm $\|(x, r)\| := \max\{\|x\|, |r| \}$. We will need  Lemma \ref{nonsepcones} below. It is a natural generalisation of the fact that in finite dimensions if  two cones are transversal and one of them is not a subspace, then they are strongly transversal.

\begin{lem}\label{nonsepcones}
	Let $\tilde C_1$ and $\tilde C_2 := C_2 \times (-\infty, 0]$ be closed convex cones in $X\times \mathbb{R}$ (hence $C_2$ is a closed convex cone in $X$).
	Let $\tilde C_1 - \tilde C_2$ be dense in $X\times \mathbb{R}$.
	Then, for each $\varepsilon>0$ there exist $\tilde w_1 \in \tilde C_1$ and $\tilde w_2 \in \tilde C_2$ with unit norm such that $\|\tilde w_1 - \tilde w_2\| < \varepsilon$.
\end{lem}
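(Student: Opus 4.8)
The plan is to apply the density hypothesis at the single point $(0,-1)\in X\times\mathbb{R}$ (here $0$ denotes the origin of $X$), chosen precisely because it encodes the way in which $\tilde C_2$ fails to be a subspace. Two features of this point will be exploited: first, $(0,-1)\in\tilde C_2$, since $0\in C_2$ and $-1\le 0$; second, and decisively, the product structure $\tilde C_2=C_2\times(-\infty,0]$ forces the translation invariance $\tilde C_2+(0,-1)\subseteq\tilde C_2$.

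So, given $\varepsilon>0$, I would first fix $\rho>0$ with $4\rho<\varepsilon$ and $\rho\le 1/2$. By density of $\tilde C_1-\tilde C_2$ there exist $\tilde c_1\in\tilde C_1$ and $\tilde c_2\in\tilde C_2$ with $\|\tilde c_1-\tilde c_2-(0,-1)\|<\rho$. Setting $\tilde c_2':=\tilde c_2+(0,-1)$, the inclusion above gives $\tilde c_2'\in\tilde C_2$, while $\|\tilde c_1-\tilde c_2'\|<\rho$. Since the $\mathbb{R}$-coordinate of $\tilde c_2'$ is $\le -1$, we have $\|\tilde c_2'\|\ge 1$, and therefore $\|\tilde c_1\|\ge\|\tilde c_2'\|-\rho\ge 1/2>0$; in particular $\tilde c_1\neq 0$, so the normalizations $\tilde w_1:=\tilde c_1/\|\tilde c_1\|$ and $\tilde w_2:=\tilde c_2'/\|\tilde c_2'\|$ are well defined, have unit norm, and lie in $\tilde C_1$ and $\tilde C_2$ respectively because these sets are cones. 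Finally, from the elementary estimate $\left\|\frac{u}{\|u\|}-\frac{v}{\|v\|}\right\|\le\frac{2\|u-v\|}{\|u\|}$ for nonzero $u,v$ — obtained by writing the difference as $\frac{u-v}{\|u\|}+v\!\left(\frac{1}{\|u\|}-\frac{1}{\|v\|}\right)$ and bounding each term using $|\,\|u\|-\|v\|\,|\le\|u-v\|$ — one gets
$$\|\tilde w_1-\tilde w_2\|\le\frac{2\|\tilde c_1-\tilde c_2'\|}{\|\tilde c_1\|}<\frac{2\rho}{1/2}=4\rho<\varepsilon,$$
which is exactly the claim.

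I do not expect a genuine obstacle: the only delicate point is guaranteeing $\tilde c_1\neq 0$ so that normalization is legitimate, and this is secured by the lower bound $\|\tilde c_2'\|\ge 1$ coming from its last coordinate. All the substance of the lemma is concentrated in the choice of the test point $(0,-1)$ together with the translation invariance $\tilde C_2+(0,-1)\subseteq\tilde C_2$ — and it is exactly here that the special form of $\tilde C_2$, rather than that of an arbitrary closed convex cone, is used.
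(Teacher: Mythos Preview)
Your proof is correct and follows essentially the same route as the paper: both apply the density hypothesis at the point $(\mathbf{0},-1)$, use the translation invariance $\tilde C_2+(\mathbf{0},-1)\subset\tilde C_2$ to remain in $\tilde C_2$, exploit the resulting lower bound on the norm coming from the last coordinate, and conclude via the same elementary normalization estimate. The only cosmetic difference is that the paper places $\|\tilde c_2'\|\ge 1$ directly in the denominator of that estimate, whereas you pass through $\|\tilde c_1\|\ge 1/2$; this costs you a factor of $2$ but is otherwise immaterial.
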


\begin{proof}
	
Let us fix an arbitrary $\varepsilon\in (0, 1)$. We consider the vector $\tilde v := (\mathbf{0}, -1)\in X\times \mathbb{R}$. Now the density of $\tilde C_1 - \tilde C_2$ yields the existence of two vectors $\tilde v_i = (v_i, r_i) \in \tilde C_i$, $i=1,2$,  such that
	$$\|\tilde v - \left(\tilde v_1 - \tilde v_2\right)\| < \frac \varepsilon 2 \, ,  $$
	hence
	\begin{equation}\label{est1}
	\|v_1-v_2\| < \frac \varepsilon 2 \mbox{ and } |-1-(r_1-r_2)| = |r_1 - (r_2-1)| < \frac \varepsilon 2 \, .
	\end{equation}
	Due to the definition of $\tilde C_2$ and that $(v_2, r_2) \in \tilde C_2$, we have $(v_2, r_2-1) \in \tilde{C_2}$. Also,
	\begin{equation}\label{est2}
	\|(v_2, r_2-1)\| \ge |r_2-1| \ge 1
	\end{equation}
	since $r_2\le 0$. Moreover, $|r_1|\ge |r_2-1|-\varepsilon/2>1/2$.
	
	Let us set
	$$\tilde w_1:= \frac{(v_1, r_1)}{\|(v_1, r_1)\|} \in \tilde C_1 \mbox{ and } \tilde w_2:= \frac{(v_2, r_2-1)}{\|(v_2, r_2-1)\|} \in \tilde C_2 \, . $$
	Apparently, $\| \tilde w_1 \| =1$ and $\| \tilde w_2 \| =1$ .
	Using \eqref{est1} and \eqref{est2}, we estimate
	\begin{align*}
	\|\tilde w_1 &- \tilde w_2\|=\Big\| \frac{(v_1, r_1)}{\|(v_1, r_1)\|} - \frac{(v_2, r_2-1)}{\|(v_2, r_2-1)\|} \Big\|\\
	& \le \Big\| \frac{(v_1, r_1)}{\|(v_1, r_1)\|} - \frac{(v_1, r_1)}{\|(v_2, r_2-1)\|} \Big\| + \Big\| \frac{(v_1, r_1)}{\|(v_2, r_2-1)\|} - \frac{(v_2, r_2-1)}{\|(v_2, r_2-1)\|} \Big\| \\
	&= \|(v_1, r_1)\| \Big| \frac{1}{\|(v_1, r_1)\|} - \frac{1}{\|(v_2, r_2-1)\|} \Big| + \frac{\|(v_1, r_1)-(v_2, r_2-1)\|}{\|(v_2, r_2-1)\|}\\
	&=\frac{|\|(v_2, r_2-1)\|-\|(v_1, r_1)\||}{\|(v_2, r_2-1)\|} + \frac{\|(v_1, r_1)-(v_2, r_2-1)\|}{\|(v_2, r_2-1)\|} \\
	&\le 2 \frac{\|(v_1, r_1)-(v_2, r_2-1)\|}{\|(v_2, r_2-1)\|} \le 2{\max \{ \|v_1-v_2\|, |r_1 - (r_2-1)| \}} < \varepsilon \, .
	\end{align*}
	The proof is complete.
\end{proof}

\begin{thm}[Lagrange multiplier rule]\label{Lagrange}
	Let us consider the optimization problem
	$$f (x)\to \min \ \mbox{ subject to } \ x \in S \ ,$$
	where $f:X \longrightarrow \mathbb{R}  \cup \{+\infty\}$ is lower semicontinuous and proper and $S$ is a closed subset of the Banach space $X$.
	Let $x_0$ be a solution of the above problem.
	Let $\tilde C_{epi f} (x_0, f(x_0))$ and $C_S(x_0)$ be closed convex cones, contained in the corresponding Bouligand approximating cones $T_{epi f}(x_0, f(x_0))$ and $T_S(x_0)$. Let at least one of them consist of derivable tangent vectors. \\
	(a) If $\tilde C_{epi f}(x_0, f(x_0))- C_S(x_0) \times (-\infty, 0]$ is not dense in $X\times \mathbb{R}$, then
	there exists a   pair
	$(\xi, \eta)\in
	X^*  \times  \mathbb{R}$ such that
	\begin{enumerate}
		\item [(i)] $(\xi,\eta)\not = (\mbox{\bf 0},0)$;
		\item [(ii)] $\eta\in  \{0,1\}$;
		\item [(iii)] $\langle \xi ,v\rangle\ \le 0$  for every $v \in C_{S}(x_0)$;
		\item [(iv)] $\langle \xi, w \rangle +\eta s\ge 0$   for every $(w,s)\in \tilde C_{epi f}(x_0, f(x_0))$.
	\end{enumerate}
	(b) If $\tilde C_{epi f}(x_0, f(x_0))- C_S(x_0) \times (-\infty, 0]$ is dense in $X\times \mathbb{R}$, then $epi f$ and $S \times (-\infty, f (x_0)]$ are not  subtransversal at $(x_0, f(x_0))$.
\end{thm}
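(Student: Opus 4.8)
The plan is to treat the two alternatives separately: part~(a) by a Hahn--Banach separation in $X\times\mathbb R$, and part~(b) by reducing to the nonseparation result, Proposition~\ref{nonsep}. In neither part is a genuinely new estimate required; the content of~(b) is the bookkeeping that makes Lemma~\ref{nonsepcones} and Proposition~\ref{nonsep} applicable.

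\emph{Part (a).} Put $\tilde C_1:=\tilde C_{epi f}(x_0,f(x_0))$ and $\tilde C_2:=C_S(x_0)\times(-\infty,0]$; these are closed convex cones, so $\tilde C_1-\tilde C_2$ is a convex cone whose closure $D$ is, by hypothesis, a proper closed convex cone. Separating a point of $(X\times\mathbb R)\setminus D$ from $D$ by the Hahn--Banach theorem gives a nonzero functional $(\zeta,\theta)\in X^*\times\mathbb R$ which, since $D$ is a cone through the origin, must be nonpositive on $D$, hence on $\tilde C_1-\tilde C_2$. Testing the resulting inequality on $(w,s)\in\tilde C_1$ (with the $\tilde C_2$--part equal to zero) and on $-(v,q)$ with $v\in C_S(x_0)$, $q\le0$, and then setting $\xi:=-\zeta$ and $\eta:=-\theta$, one reads off $\eta\ge0$ together with (iii) and (iv). Finally one rescales $(\xi,\eta)$ to force $\eta\in\{0,1\}$, and $(\xi,\eta)\ne(\mathbf{0},0)$ because $\eta=0$ forces $\zeta\ne0$. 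Note that the optimality of $x_0$ plays no role in~(a).

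\emph{Part (b).} Suppose, for a contradiction, that $epi f$ and $S\times(-\infty,f(x_0)]$ are subtransversal at $(x_0,f(x_0))$, say with constant $K>0$. Density of $\tilde C_1-\tilde C_2$ lets us apply Lemma~\ref{nonsepcones} with some $\varepsilon<1/K$ and obtain unit vectors $\tilde w_1\in\tilde C_{epi f}(x_0,f(x_0))$ and $\tilde w_2=(w_2,s_2)\in C_S(x_0)\times(-\infty,0]$ with $\|\tilde w_1-\tilde w_2\|<1/K$. Here $\tilde w_1$ lies in the Bouligand cone $T_{epi f}(x_0,f(x_0))$, and $\tilde w_2$ lies in $T_S(x_0)\times(-\infty,0]$, which is contained in the Bouligand cone of $S\times(-\infty,f(x_0)]$ at $(x_0,f(x_0))$; moreover, since $(-\infty,0]$ consists of derivable tangent vectors to $(-\infty,f(x_0)]$ at $f(x_0)$, the product $C_S(x_0)\times(-\infty,0]$ consists of derivable tangent vectors to $S\times(-\infty,f(x_0)]$ whenever $C_S(x_0)$ consists of derivable tangent vectors to $S$ (take the product of a derivable curve in $S$ with the segment $t\mapsto f(x_0)+ts_2$), and symmetrically for $\tilde C_{epi f}(x_0,f(x_0))$ and $epi f$. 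Thus, after relabelling the two sets so that the one whose chosen cone consists of derivable tangent vectors plays the role of the set ``$B$'' in Proposition~\ref{nonsep}, all hypotheses of that proposition hold, and it follows that $epi f$ and $S\times(-\infty,f(x_0)]$ cannot be locally separated at $(x_0,f(x_0))$.

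It remains to contradict this, and here the optimality of $x_0$ enters: $(epi f)\cap(S\times(-\infty,f(x_0)])=\{(x,r):x\in S,\ f(x)\le r\le f(x_0)\}$, and for $x\in S$ one has $f(x)\ge f(x_0)$, so $f(x)=r=f(x_0)$, i.e.\ the intersection equals $\{(x,f(x_0)):x\in S,\ f(x)=f(x_0)\}$, in which $(x_0,f(x_0))$ is isolated. Hence $epi f$ and $S\times(-\infty,f(x_0)]$ \emph{are} locally separated at $(x_0,f(x_0))$, the desired contradiction, and~(b) follows. I expect the main obstacle to be precisely this last step: converting the purely metric conclusion ``not locally separated'' of Proposition~\ref{nonsep} into a genuine contradiction forces one to use, simultaneously, the concrete shape of the two sets and the optimality of $x_0$; the tangent-cone bookkeeping of the preceding paragraph — in particular, steering the derivable cone into the slot ``$B$'' of Proposition~\ref{nonsep} — is the other point needing attention, though it is routine.
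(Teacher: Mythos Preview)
Part~(a) is correct and follows the paper's own argument almost verbatim: both separate the closed convex cone $\overline{\tilde C_1-\tilde C_2}$ from a point (respectively, an open cone) lying outside it and then read off the multiplier $(\xi,\eta)$ from the resulting inequality.

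In part~(b) your outline is also the paper's---Lemma~\ref{nonsepcones} followed by Proposition~\ref{nonsep}---but the final step contains a genuine gap. You assert that $(x_0,f(x_0))$ is \emph{isolated} in
\[
epi f\;\cap\;\bigl(S\times(-\infty,f(x_0)]\bigr)\;=\;\{(x,f(x_0)):x\in S,\ f(x)=f(x_0)\}.
\]
This is false whenever $x_0$ is a minimizer that is not \emph{strict}: if $x_m\in S$, $x_m\to x_0$, $x_m\neq x_0$ and $f(x_m)=f(x_0)$ (for instance $f$ constant on a neighbourhood of $x_0$ in $S$), then $(x_m,f(x_0))$ lies in the intersection and converges to $(x_0,f(x_0))$. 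In that situation the two sets are \emph{not} locally separated at $(x_0,f(x_0))$, so the conclusion of Proposition~\ref{nonsep} yields no contradiction whatsoever. Your own closing remark anticipates that this is the delicate spot, and indeed optimality alone is not enough here.

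The paper closes exactly this gap by first reducing, without loss of generality, to the case where $x_0$ is a \emph{strong} minimum: one replaces $f$ by $f+g$ with $g(x):=\|x-x_0\|^2$. The substantive verification is that $T_{epi f}(x_0,f(x_0))=T_{epi(f+g)}(x_0,f(x_0))$ and likewise for the derivable cones, because $g$ is strictly Fr\'echet differentiable at $x_0$ with $g(x_0)=0$ and $g'(x_0)=\mathbf{0}$; hence the density hypothesis and the derivability assumption are unchanged. For the perturbed function the intersection near $(x_0,f(x_0))$ is genuinely the singleton $\{(x_0,f(x_0))\}$, and then your argument (and the paper's) goes through.
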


\begin{proof}
	(a) If $\tilde C_{epi f}(x_0, f(x_0))- C_S(x_0) \times (-\infty, 0]$ is not dense in $X\times \mathbb{R}$, then there exist $(\bar x, \bar r) \in X\times \mathbb{R}$ and $d>0$ such that
	$$\left(\overline{\tilde C_{epi f}(x_0, f(x_0))- C_S(x_0) \times (-\infty, 0]} \right) \cap \left((\bar x, \bar r) +d {\mbox{\bf B}}_{X\times \mathbb{R}} \right) = \emptyset \, .$$
	Then,
	$$\tilde C \cap \tilde  D = \emptyset \, ,$$
	where $\tilde C := \overline{\tilde C_{epi f}(x_0, f(x_0))- C_S(x_0) \times (-\infty, 0]} $ is a closed convex cone and $$\tilde D:= \{ (x, r) \in X\times\mathbb{R} \ | \ (x, r) = \alpha((\bar x, \bar r) + d(x_1, r_1)), \ \alpha>0, \ (x_1, r_1) \in {\mbox{\bf B}}_{X\times \mathbb{R}} \}$$ is an open convex cone (non-empty). We can separate $\tilde C$ and $\tilde D$ and find a non-zero  pair
	$(\xi, \eta)\in X^*  \times  \mathbb{R}$ and a real $\alpha$ such that
	$$\langle \xi, v_1\rangle + \eta r_1 \ge \alpha > \langle \xi, v_2\rangle + \eta r_2$$
	for all $(v_1, r_1) \in \tilde C$ and $(v_2, r_2) \in \tilde D$. Since $(\mathbf{0}, 0)$ lies in $\tilde C$ and on the boundary of $\tilde D$, we have that $\alpha=0$. Hence,
	$$\langle \xi, v_1\rangle + \eta r_1 \ge 0 $$
	for all $(v_1, r_1) \in \tilde C$, which is
	$$\langle \xi, v'-v''\rangle + \eta (r'-r'') \ge 0 $$
	for all $(v', r') \in \tilde C_{epi f}(x_0, f(x_0))$ and $(v'', r'') \in C_S(x_0) \times (-\infty, 0]$. By taking $v'=v''=\mathbf{0}$, $r'=0$ and $r''<0$ we obtain that $\eta \ge0$. Hence without loss of generality we
	may assume that $\eta\in  \{0,1\}$. By taking $v'=\mathbf{0}$, $v'' = v \in C_S(x_0)$ and $r'=r''=0$ we obtain that $\langle \xi ,v\rangle\ \le 0$. By taking $(v', r') = (w, s) \in \tilde C_{epi f}(x_0, f(x_0))$ and $(v'', r'')=(\mathbf{0},0)$, we obtain that  $\langle \xi, w \rangle +\eta s\ge 0$.
	
	(b) Let $\tilde C_{epi f}(x_0, f(x_0))- C_S(x_0) \times (-\infty, 0]$ be dense in $X\times \mathbb{R}$.
	
	Without loss of generality we may assume that $x_0$ is a strong minimum of $f$ on $S$. This is due to the fact that if $g:X \to \mathbb{R}  \cup \{+\infty\}$ is strictly Fr\'echet differentiable at $x_0$, $g(x_0)=0$ and $g'(x_0)=\mathbf{0}$, then
	$$T_{epi f}(x_0, f(x_0)) = T_{epi (f+g)}(x_0, f(x_0)) \mbox{ and } G_{epi f}(x_0, f(x_0)) = G_{epi (f+g)}(x_0, f(x_0)) \, .$$
	Indeed, $(v_0, r_0) \in T_{epi f}(x_0, f(x_0))$ if and only if there exist sequences $(v_m, r_m) \to (v_0, r_0)$ and $t_m \searrow 0$ such that
	$$(x_0, f(x_0)) + t_m(v_m, r_m) \in epi f $$
	which is equivalent to
	$$\frac{f(x_0+t_m v_m) - f(x_0)}{t_m} \le r_m \, .$$
	Let us denote
	$$r'_m := \frac{g(x_0+t_m v_m) - g(x_0)}{t_m} = \frac{g(x_0) +  \langle g'(x_0), t_m v_m \rangle +o(\|t_m v_m\|) - g(x_0)}{t_m} \to 0 \, .$$ Then,
	$$\frac{(f+g)(x_0+t_m v_m) - (f+g)(x_0)}{t_m} \le r_m + r'_m$$
	which is equivalent to
	$$(x_0, (f+g)(x_0)) + t_m(v_m, r_m+r'_m) \in epi (f+g) $$
	for the sequences  $(v_m, r_m+ r'_m) \to (v_0, r_0)$ and $t_m \searrow 0$. This verifies that $T_{epi f}(x_0, f(x_0)) \subset T_{epi (f+g)}(x_0, f(x_0))$. As $-g$ satisfies the same assumptions as $g$, the reverse inclusion is verified as well. The proof for derivable tangent cones is analogous. By putting $g(x):=\|x-x_0\|^2$, we obtain that
	$$ \tilde C_{epi f}(x_0, f(x_0)) = C_{epi (f+g)}(x_0, f(x_0)) $$
	and $x_0$ is a strong minimum of $f+g$ on $S$.
	
	Let us assume that $epi f$ and $\tilde S:= S \times (-\infty, f (x_0)]$ are subtransversal at $(x_0, f(x_0))$ with constant $K>0$. By applying Lemma \ref{nonsepcones} for $\varepsilon:=\frac{1}{K}$ and then Proposition \ref{nonsep}, we obtain that the sets $epi f$ and $\tilde  S$ can not be separated. That is, there exists a sequence $(x_m, r_m)\in epi f \cap \tilde  S$ converging to $(x_0, f (x_0))$ such that $(x_m,r_m) \neq (x_0, f (x_0))$ for every positive integer $m$.  But $(x_m, r_m)\in epi f \cap \tilde  S$ implies that $r_m\ge f(x_m)$   and     $r_m\le f(x_0)$. Because
$x_0$ is a strong local minimum of $f$ on $S$,  for each
sufficiently large $m$ the following inequalities hold true
$
r_m \ge f(x_m) > f(x_0) \ge r_m,
$
 a contradiction.
	
	Therefore $epi f$ and $\tilde S:= S \times (-\infty, f (x_0)]$ are not  subtransversal at $(x_0, f(x_0))$, which completes the proof.
\end{proof}

\section{Intersection properties}

Let $A$ and $B$ be two smooth manifolds and $x_0 \in A\cap B$.
The classical meaning of transversality in this  case is that the  tangent space   to the   manifold $A\cap B$ at the point
$x_0$   equals the intersection of the tangent spaces to $A$ and $B$, respectively, at $x_0$.
Next we   obtain   some tangential intersection properties as corollaries of subtransversality.

\begin{prop}[Intersection property with respect to Bouligand and derivable tangent cones]\label{boul}
	Let $A$ and $B$ be closed subsets of the Banach space $X$ and let
	$A$ and $B$ be subtransversal at $x_0 \in A \cap B$.
	Then, %the following hold true
	$$ T_{A}(x_0) \cap  G_{B}(x_0) \subset  T_{A\cap B}(x_0)\, , $$
	where $ T_{A}(x_0)$ ($ T_{A\cap B}(x_0)$)  is the Bouligand tangent cone to $A$ ($A\cap B$) at $x_0$ and $G_{B}(x_0)$ is the derivable  tangent cone to $B$ at $x_0$. Moreover,
	$$ G_{A}(x_0) \cap  G_{B}(x_0) =  G_{A\cap B}(x_0)\, . $$
\end{prop}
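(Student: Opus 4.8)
The plan is to deduce all inclusions from the nonseparation result (Proposition \ref{nonsep}) combined with the hypothesis of subtransversality. First I would prove the inclusion $T_A(x_0)\cap G_B(x_0)\subset T_{A\cap B}(x_0)$. Fix $v\in T_A(x_0)\cap G_B(x_0)$ with $v\neq \mathbf 0$ (the case $v=\mathbf 0$ being trivial) and normalize; the real content is to show $v/\|v\|$ (and hence $v$, since tangent cones are cones) lies in $T_{A\cap B}(x_0)$. The trick is to apply the nonseparation machinery not to $A$ and $B$ directly, but to the translated/shrunk sets $A$ and $B$ intersected with a suitable ball or, more precisely, to consider for small $t>0$ the points $x_t^A:=x_0+tv_t^A\in A$ coming from the Bouligand tangency of $v$ to $A$, and the derivable curve $x_0+sv_s^B\in B$. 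Using the subtransversality inequality $d(x,A\cap B)\le K(d(x,A)+d(x,B))$ at the point $x=x_t^A$, exactly as in the proof of Proposition \ref{nonsep}, I get points $x_t^{AB}\in A\cap B$ with $\|x_t^{AB}-x_t^A\|\le K\|x_t^A-x_t^B\|\le K t\,\|v_t^A-v_t^B\|$. Since $v\in T_A(x_0)\cap G_B(x_0)$ forces $v_t^A\to v$ and $v_t^B\to v$, the quantity $\|v_t^A-v_t^B\|\to 0$, so $(x_t^{AB}-x_0)/t \to v$ along $t\searrow 0$, which is precisely the statement that $v\in T_{A\cap B}(x_0)$.

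Next I would establish the chain $G_A(x_0)\cap G_B(x_0)=G_{A\cap B}(x_0)$. The inclusion $G_{A\cap B}(x_0)\subset G_A(x_0)\cap G_B(x_0)$ is immediate and requires no transversality: a derivable curve into $A\cap B$ is simultaneously a derivable curve into $A$ and into $B$. For the reverse inclusion, fix $v\in G_A(x_0)\cap G_B(x_0)$, so there are $\xi^A:[0,\varepsilon]\to A$ and $\xi^B:[0,\varepsilon]\to B$ with $\xi^A(0)=\xi^B(0)=x_0$ and $(\xi^A(t)-x_0)/t\to v$, $(\xi^B(t)-x_0)/t\to v$ as $t\searrow 0$. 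Now for each small $t$ apply subtransversality at the point $x=\xi^A(t)$ (inside the ball $x_0+\delta\bar{\mathbf B}$ for $t$ small), getting $\zeta(t)\in A\cap B$ with $\|\zeta(t)-\xi^A(t)\|\le K\|\xi^A(t)-\xi^B(t)\|=o(t)$; hence $(\zeta(t)-x_0)/t\to v$. The delicate point here is that $G$-tangency demands a genuine \emph{function} $t\mapsto\zeta(t)$ (defined for all small $t$, converging \emph{for every} sequence $t_k\searrow 0$), not merely a sequence; but since $\zeta(t)$ is produced for every $t$ in a half-neighborhood of $0$ and the estimate $\|\zeta(t)-x_0-tv\|/t\to 0$ holds as $t\searrow 0$ (not just along a subsequence), the function $\zeta$ does the job. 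I would note that $\zeta$ need not be continuous, but the definition of the derivable tangent cone given in the introduction only requires the difference-quotient convergence, not continuity of $\xi$, so this is fine.

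The main obstacle I anticipate is purely bookkeeping: making sure at each application of the subtransversality estimate that the base point $x=\xi^A(t)$ (respectively $x_t^A$) actually lies in $x_0+\delta\bar{\mathbf B}$, which holds for all $t$ small enough since $\xi^A(t)\to x_0$, and that the auxiliary points $x_t^B$ and the chosen near-minimizers stay in the relevant ball — this is handled exactly as in the proof of Proposition \ref{nonsep} by choosing the index threshold $m_1$ (here: a threshold $t_0>0$) appropriately. A second, smaller subtlety is that in the first inclusion $v$ is only Bouligand-tangent to $A$, so $v_t^A$ is defined only along a sequence $t_m\searrow 0$; this is not a problem because $T_{A\cap B}(x_0)$ is itself a Bouligand cone, so recovering $v$ as a limit along that same sequence $t_m$ suffices. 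No variational principle or dual-space argument is needed anywhere; everything is a direct quantitative consequence of the subtransversality inequality and the definitions of the three tangent cones recalled in Section 1.
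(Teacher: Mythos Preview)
Your proposal is correct and follows essentially the same approach as the paper's proof: apply the subtransversality inequality at the point $x_m^A$ to produce $x_m^{AB}\in A\cap B$ with $\|x_m^{AB}-x_m^A\|$ controlled by $\|x_m^A-x_m^B\|=t_m\|v_m^A-v_m^B\|=o(t_m)$, then read off Bouligand (respectively derivable) tangency of $v$ to $A\cap B$. Two cosmetic points: your framing ``deduce all inclusions from Proposition~\ref{nonsep}'' is slightly misleading since you do not invoke that proposition as a black box but rather repeat its computation, and the bound $\|x_t^{AB}-x_t^A\|\le K\|x_t^A-x_t^B\|$ needs a small slack (the infimum defining $d(x_t^A,A\cap B)$ need not be attained), which the paper handles by allowing $(K+1)$ in place of $K$; neither affects the argument.
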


\begin{proof}
	Let $v_0$ be in $T_{A}(x_0) \cap  G_{B}(x_0)$. Without loss of generality, we may assume that $\|v_0\|=1$.  Since  $v_0$ belongs to the Bouligand tangent cone to $A$ at $x_0$, we have that there exist sequences $t_m \searrow 0$ and $v_m^A \to v_0$ such that
	$x_0 +t_m v_m^A \in A \, . $
	Since $v_0$  belongs to the derivable  tangent cone to $B$ at $x_0$, we have that for all small enough $t>0$ there exists $v_t^B \in X$, such that $x_0 +t v_t^B \in B$ and $v_t^B \to v_0$ as ${t\searrow 0}$.
	Therefore, $x_0 +t_m v_m^B \in B$ for $m\in \mathbb{N}$ large enough and $v_m^B \to v_0$
	(here $ v_m^B:=v_{t_m}^B $).
	
	Let us fix an arbitrary positive $\varepsilon$. Let $K$ and $\delta$ be the constants from the definition of subtransversality.
Then, there exists $m_0\in \mathbb{N}$ such that
	$$ \|v_m^A-v_0\| \le \frac{\varepsilon}{2K+3} \mbox{ and } \|v_m^B-v_0\| \le \frac{\varepsilon}{2K+3} $$
	for all $m\ge m_0$.    Let $m_1\ge m_0$ be such that
	$$t_{m} \le \frac{\delta}{1+\varepsilon} \mbox{ for all } m\ge m_1$$
	and let us denote
	$$ x^A_m := x_0 +t_{m} v_{m}^A \in A \mbox{ and } x^B_m := x_0 +t_{m} v_{m}^B \in B \mbox{ for all } m\ge m_1 \, . $$
	It is straightforward  that
	$$\|x^A_m-x^B_m\| = t_m\|v_{m}^A - v_{m}^B\|\le t_m\left(\|v_{m}^A -v_0\| + \| v_0- v_{m}^B\|\right)\le \frac{2\varepsilon}{2K+3} t_m \, .$$
	
	Since $$\|x^A_m-x_0\|=t_m\|v_m^A\|\le t_m\left(\|v_0\|+ \frac{\varepsilon}{2K+3}\right)
	\le \frac{\delta}{1+\varepsilon} \left(1+
	\frac{\varepsilon}{2K+3}\right)<\delta$$
	and analogously
	$\|x^B_m-x_0\|<\delta,$ for  $m\ge m_1$ we have
	$$d\left( x^A_m, A\cap B\right) \le K\left( d\left( x^A_m, A\right)+d\left( x^A_m, B\right)\right) \le K\cdot d\left( x^A_m, x^B_m\right) = K \left\| x^A_m -x^B_m\right\|\, . $$
	From the definition of a distance from a point to a set there exists $x^{AB}_m \in A \cap B$ with
	$$\left\|x^{AB}_m - x^A_m\right\| \le d\left( x^A_m, A\cap B\right) + \left\| x^A_m -x^B_m\right\| \le  ( K+1) \left\| x^A_m -x^B_m\right\|\ .$$
Note that if $x^A_m=x^B_m$ we just put $x^{AB}_m$ to coincide with these points and all addends are zero.
  We  estimate
	\begin{align*}
	\|x^{AB}_m& - ( x_0 + t_m v_0)\| = \|x^{AB}_m - (x_0 + t_m v_m^A) -t_m(v_0- v_m^A)\| \\
	&\leq \|x^{AB}_m -x^A_m\| + t_m\|v_0- v_m^A\| \le ( K+1) \left\| x^A_m -x^B_m\right\| +t_m \frac{\varepsilon}{2K+3}  \\
	&\leq ( K+1)\cdot \frac{2\varepsilon}{2K+3}\, t_m +  \frac{\varepsilon}{2K+3}\, t_m = \varepsilon t_m \, .
	\end{align*}
	Hence, for $m\ge m_1$, the following is true
	$$ x^{AB}_m \in x_0 + t_m(v_0+ \varepsilon \bar {\mbox{\bf B}}) \, .$$
	We have obtained that for every $v_0 \in  T_A(x_0) \cap G_B(x_0)$, $\|v_0\|=1$ and for every $\varepsilon >0$ there exists
	$m_1 \in \mathbb{N}$ such that
	$$(A\cap B) \cap \left( x_0 + t_m(v_0+ \varepsilon \bar {\mbox{\bf B}})\right) \neq \emptyset $$
	for all $m\ge m_1$. From this, it follows that $ T_A(x_0) \cap G_B(x_0)$ is a subset of $T_{A\cap B}(x_0)$.
	
	The inclusion $ G_{A}(x_0) \cap  G_{B}(x_0) \subset  G_{A\cap B}(x_0) $ can be proved in the same way. Then, the
equality for the derivable cones follows from their monotonicity.
\end{proof}

\begin{prop}[Intersection property with respect to Clarke tangent cones]\label{clarke_t}
	Let $A$ and $B$ be closed subsets of the Banach space $X$ and let
	$A$ and $B$ be subtransversal at $x_0 \in A \cap B$.
	Then, %the following hold true
	$$\hat T_{A}(x_0) \cap \hat T_{B}(x_0) \subset \hat T_{A\cap B}(x_0)\, , $$
	where $\hat T_S(x_0)$ is the Clarke tangent cone to $S$ at $x_0$.
\end{prop}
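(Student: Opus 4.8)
The plan is to mimic the proof of Proposition \ref{boul}, but now working with the stronger uniformity built into the Clarke tangent cone. Fix $v_0 \in \hat T_A(x_0) \cap \hat T_B(x_0)$; without loss of generality $\|v_0\| = 1$. To show $v_0 \in \hat T_{A\cap B}(x_0)$, I must take an \emph{arbitrary} sequence $z_k \in A\cap B$ with $z_k \to x_0$ and an arbitrary sequence $\tau_k \searrow 0$, and produce $w_k \in A\cap B$ with $(w_k - z_k)/\tau_k \to v_0$. The idea is to first use the Clarke property of $A$ at $x_0$ — applied to the admissible sequences $z_k \in A$, $z_k \to x_0$ and $\tau_k \searrow 0$ — to get $y_k^A \in A$ with $(y_k^A - z_k)/\tau_k \to v_0$; likewise use the Clarke property of $B$ to get $y_k^B \in B$ with $(y_k^B - z_k)/\tau_k \to v_0$. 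Then $y_k^A$ and $y_k^B$ are both close to $z_k + \tau_k v_0$, so $\|y_k^A - y_k^B\| = o(\tau_k)$, and both points converge to $x_0$.

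Now the subtransversality of $A$ and $B$ at $x_0$, with constants $\delta > 0$ and $K > 0$, lets me correct $y_k^A$ (or $y_k^B$) to a point of $A \cap B$: once $k$ is large enough that $y_k^A, y_k^B \in x_0 + \delta \bar{\mbox{\bf B}}$, I have
$$d(y_k^A, A\cap B) \le K\big(d(y_k^A, A) + d(y_k^A, B)\big) \le K\,\|y_k^A - y_k^B\|,$$
and choosing $w_k \in A \cap B$ with $\|w_k - y_k^A\| \le d(y_k^A, A\cap B) + \|y_k^A - y_k^B\| \le (K+1)\|y_k^A - y_k^B\|$ (taking $w_k = y_k^A = y_k^B$ if these coincide, so all terms vanish), I get
$$\frac{\|w_k - z_k - \tau_k v_0\|}{\tau_k} \le \frac{\|w_k - y_k^A\|}{\tau_k} + \frac{\|y_k^A - z_k - \tau_k v_0\|}{\tau_k} \le (K+1)\frac{\|y_k^A - y_k^B\|}{\tau_k} + \frac{\|y_k^A - z_k - \tau_k v_0\|}{\tau_k} \longrightarrow 0,$$
since both $\|y_k^A - y_k^B\|/\tau_k \to 0$ (from $y_k^A, y_k^B$ being within $o(\tau_k)$ of $z_k + \tau_k v_0$) and $\|y_k^A - z_k - \tau_k v_0\|/\tau_k \to 0$. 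Hence $(w_k - z_k)/\tau_k \to v_0$ with $w_k \in A \cap B$, $w_k \to x_0$, which is exactly the definition of $v_0 \in \hat T_{A\cap B}(x_0)$; the homogeneity of the cone then removes the normalization $\|v_0\| = 1$.

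The main thing to be careful about — and the only nontrivial point — is the bookkeeping of the "large enough $k$" thresholds and the verification that $y_k^A, y_k^B$ eventually lie in $x_0 + \delta\bar{\mbox{\bf B}}$: since $(y_k^A - z_k)/\tau_k \to v_0$ and $z_k \to x_0$, $\tau_k \to 0$, we have $y_k^A \to x_0$, so for all $k$ past some index $k_1$ the subtransversality inequality applies; the same for $y_k^B$. There is no transfinite construction and no appeal to Theorem \ref{ttth} here — subtransversality is used purely as the metric estimate $d(\cdot, A\cap B) \le K(d(\cdot,A)+d(\cdot,B))$ — so the proof is short. The one genuinely essential input beyond Proposition \ref{boul}'s argument is that the Clarke tangent cone's defining condition quantifies over \emph{every} admissible approach sequence, which is precisely what lets us feed in the prescribed $z_k, \tau_k$ rather than choosing them ourselves.
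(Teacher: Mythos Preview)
Your proof is correct and follows essentially the same approach as the paper's: use Clarke tangency at $x_0$ for $A$ and $B$ separately (applied to the common base point in $A\cap B$) to get nearby points $y_k^A\in A$, $y_k^B\in B$ with $\|y_k^A-y_k^B\|=o(\tau_k)$, then correct to $A\cap B$ via the subtransversality estimate and a triangle-inequality computation. The only cosmetic difference is that you work with the sequential definition of $\hat T$ (which is in fact the paper's stated definition), whereas the paper's proof is phrased in the equivalent $\varepsilon$--$\delta$ neighborhood form; the structure and all the key estimates are the same.
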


\begin{proof}
	Let us fix a positive $\varepsilon$ and an arbitrary $v_0 \in \hat T_{A}(x_0) \cap \hat T_{B}(x_0)$. Let $\delta >0$  and $K >0$ be the constants from the definition of subtransversality of $A$ and $B$ at $x_0$.
	
	From the definition of Clarke tangent cone for $v_0$, we have that for $\eta := \frac{\varepsilon }{2K+3}>0$ there exists $\delta_A >0 $  such that
	for all $x \in (x_0 + \delta_A \bar {\mbox{\bf B}})\cap A$ and for all $t \in (0, \delta_A)$ it holds true that
	$$(x+t(v_0+\eta \bar {\mbox{\bf B}})) \cap A \neq \emptyset$$ and correspondingly, there exists $\delta_B>0$ such that for all $x \in (x_0 + \delta_B \bar {\mbox{\bf B}})\cap B$ and for all $t \in (0, \delta_B)$ it holds true that
	$$(x+t(v_0+\eta  \bar {\mbox{\bf B}})) \cap B \neq \emptyset \, .$$
	
	Let us fix $\bar \delta := \min\{\frac \delta 2, \delta_A, \delta_B\}>0$ and an arbitrary $\bar x \in (x_0 + \bar  \delta \bar {\mbox{\bf B}})\cap (A \cap B)$.  Let $h_0$ be an arbitrary positive real satisfying
	\begin{equation}\label{h0}
	h_0 \le \bar h:=  \min\left\{\bar \delta,\,  \frac{\delta}{2(\eta +\|v_0\|)}\right\} \, .
	\end{equation}
	We obtain that there exist vectors $v_0^A \in X$ and $v_0^B \in X$ such that
	\begin{equation}\label{vTilde}
	\|v_0^A -  v_0 \| \le  \eta , \ \|v_0^B -  v_0\| \le  \eta  \,
	\end{equation}
	and
	$$ x^A := \bar x + h_0 v_0^A \in A, \ x^B := \bar x + h_0 v_0^B \in B \, . $$
	
	Taking into account \eqref{vTilde}, we can verify directly
	that
	\begin{align*}
	\|x^A - x^B\| = h_0 \|v_0^A - v_0^B\|= h_0\|v_0^A - v_0 + v_0 -v_0^B \| \le 2 \eta  h_0
	\end{align*}
	and
	$$\|x^A - \bar x\| =  h_0 \|v_0^A \| = h_0\|v_0^A -
	v_0 +  v_0\|  \le h_0(\eta  + \|  v_0\|) \, .$$
	Analogously, we obtain that
	$$ \|x^B - \bar x\| \le h_0(\eta  + \|  v_0\|) \, .$$
	
	We have that
	$$
	\|x^A  -  x_0\|  \le \|x^A - \bar x\| + \|\bar x - x_0\|
	\le h_0(\eta  + \|  v_0\|) + \bar \delta  \le \frac \delta 2 +  \bar \delta
	\le \delta
	$$
	using the estimate of $h_0$ and the definition of $ \bar \delta$. Analogously,
	$\|x^B -  x_0\|\le \delta$.
	
	Therefore,
$$d\left( x^A, A\cap B\right) \le K\left( d\left( x^A, A\right)+d\left( x^A, B\right)\right) \le K\cdot d\left( x^A, x^B\right) = K \left\| x^A  -x^B \right\|\, . $$
	From the definition of a distance from a point to a set there exists $x^{AB} \in A \cap B$ with
	$$\left\|x^{AB} - x^A\right\| \le d\left( x^A, A\cap B\right) + \left\| x^A -x^B\right\| \le  ( K+1) \left\| x^A -x^B\right\|\ .$$
Note that if $x^A=x^B$ we just put $x^{AB}$ to coincide with these points and all addends are zero.

 We  estimate
	\begin{align*}
	\|x^{AB}& - (\bar x + h_0 v_0)\| = \|x^{AB} - (\bar x + h_0 v_0^A) -h_0(v_0- v_0^A)\| \\
	&\leq \|x^{AB} -x^A\| + h_0\|v_0- v_0^A\| \le ( K+1) \left\| x^A -x^B\right\|\ +h_0 \eta  \\
	&\leq ( K+1) 2\eta h_0 +h_0 \eta  = h_0\eta \left( 2K+3\right) = \varepsilon h_0 \, .
	\end{align*}
	Hence, %for $x^{AB} \in A\cap B$, the following is true
	$$ x^{AB} \in \left(A\cap B\right)\cap \left( \bar x + h_0(v_0+ \varepsilon \bar {\mbox{\bf B}}) \right) \, .$$
	
	We have obtained that for every $v_0 \in \hat T_A(x_0) \cap \hat T_B(x_0)$ and for every $\varepsilon >0$ there exists
	$\bar \delta>0$ such that for each point $\bar x \in (A\cap B) \cap (x_0 + \bar \delta \bar {\mbox{\bf
			B}})$, there exists $\bar h$ such that
	$$(A\cap B) \cap (\bar x+ h_0 (v_0 + \bar \varepsilon \bar {\mbox{\bf B}})) \neq \emptyset$$
	for each $ h_0 \in [0, \bar h]$. Therefore, $v_0 \in \hat T_{A\cap B}(x_0)$. This completes the proof.
\end{proof}

\begin{rem}
It is remarkable   that the same intersection properties (cf. Proposition \ref{boul} and Proposition \ref{clarke_t})
  appear in \cite{AF91} 	back in 1990.    Proposition \ref{Lelincho} shows that
the ``local 	transversality condition'' assumed  in    \cite{AF91} is a sufficient
	condition for tangential transversality. Hence, because tangential transversality implies subtransversality,
	Corollary 4.3.5 in \cite{AF91} follows from our Proposition \ref{boul} and Proposition \ref{clarke_t}.
\end{rem}

\begin{prop}\label{Lelincho} Let $A$ and $B$ be closed subsets of the Banach space $X$ and  let $x_0\in A\cap B$.
  If there exist constants $\delta >0$, $\alpha\in [0,1)$   and  $ M>0$   such that for each  $   x^A \in (x_0 + \delta \bar {\mbox{\bf B}})\cap A$
    and for each $ x^B \in (x_0 + \delta \bar {\mbox{\bf B}})\cap B $  it is true that $ \bar {\mbox{\bf B}} \subset \left( G_A(x^A)\cap M\bar {\mbox{\bf B}}\right) -T_B(x^B)+\alpha \bar {\mbox{\bf B}}$, then the sets $A$ and $B$ are tangentially transversal at $x_0$.
\end{prop}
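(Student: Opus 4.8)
The plan is to verify the definition of tangential transversality directly, keeping the same $\delta$ and taking $\eta := (1-\alpha)/2$ and $M':= M+3$. So fix two different points $x^A \in (x_0 + \delta \bar {\mbox{\bf B}})\cap A$ and $x^B \in (x_0 + \delta \bar {\mbox{\bf B}})\cap B$, put $\rho := \|x^A - x^B\| > 0$, and consider the unit vector $u := (x^B - x^A)/\rho$. Applying the hypothesis at the pair $(x^A, x^B)$ to this $u$ produces a decomposition $u = v^A - v^B + e$ with $v^A \in G_A(x^A)$, $\|v^A\| \le M$, $v^B \in T_B(x^B)$ and $\|e\| \le \alpha$; note that then $v^B = v^A + e - u$ is automatically bounded, $\|v^B\| \le M + 2$.

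The key point is to realise these two tangent directions along one common sequence of stepsizes. Since $v^B$ lies in the Bouligand tangent cone $T_B(x^B)$, there are $s_k \searrow 0$ and $v^B_k \to v^B$ with $x^B + s_k v^B_k \in B$; since $v^A$ lies in the \emph{derivable} tangent cone $G_A(x^A)$, there is a curve into $A$ issuing from $x^A$ whose right difference quotients converge to $v^A$ \emph{along that very sequence} $\{s_k\}$, so that for suitable $v^A_k \to v^A$ one has $x^A + s_k v^A_k \in A$. This is precisely where the asymmetry of the hypothesis --- the derivable cone on the $A$-side, the Bouligand cone on the $B$-side --- is needed, and it is the only genuinely delicate point in the argument. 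For $k$ large, both $\|v^A_k\|$ and $\|v^B_k\|$ are $\le M+3$.

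It then remains to estimate. Using $x^A - x^B = -\rho u$ and $v^A_k - v^B_k = (v^A - v^B) + \theta_k = (u - e) + \theta_k$ with $\theta_k \to \mathbf{0}$, one gets
$$x^A - x^B + s_k(v^A_k - v^B_k) = (s_k - \rho)\,u - s_k e + s_k \theta_k \, ,$$
hence, as soon as $s_k < \rho$,
$$\|x^A - x^B + s_k(v^A_k - v^B_k)\| \le (\rho - s_k) + s_k \alpha + s_k\|\theta_k\| = \|x^A - x^B\| - s_k\bigl(1 - \alpha - \|\theta_k\|\bigr) \, .$$
Passing to a subsequence $\{t_m\}$ of $\{s_k\}$ along which in addition $\|\theta_k\| \le (1-\alpha)/2$ and $\|v^A_k\|, \|v^B_k\| \le M+3$, and letting $w^A_m, w^B_m$ denote the corresponding $v^A_k, v^B_k$, the last display becomes $\|x^A - x^B + t_m(w^A_m - w^B_m)\| \le \|x^A - x^B\| - t_m \eta$ with $\eta = (1-\alpha)/2$, which is exactly the inequality required in the definition of tangential transversality. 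Thus $A$ and $B$ are tangentially transversal at $x_0$ with constants $M' = M+3$, $\delta$ and $\eta = (1-\alpha)/2$.
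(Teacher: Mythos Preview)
Your proof is correct and follows essentially the same approach as the paper's: both choose the unit vector $(x^B-x^A)/\|x^B-x^A\|$, decompose it via the hypothesis as a derivable tangent to $A$ minus a Bouligand tangent to $B$ plus an $\alpha$-small error, then realise both tangent directions along the Bouligand sequence $\{t_m\}$ (using derivability on the $A$-side to match the steps) and estimate just as you do. The only cosmetic difference is that the paper allows any $\eta<1-\alpha$ where you fix $\eta=(1-\alpha)/2$, and the paper bounds the two error terms $\|w^A_m-w^A\|,\|w^B_m-w^B\|$ separately rather than combining them into your $\theta_k$.
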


\begin{proof}
Let us fix an arbitrary positive real $\eta <1-\alpha$ and check that $A$ and $B$ are tangentially transversal at $x_0$ with constants $\delta$, $M+3$ and $\eta$.

Let us choose arbitrary $   x^A \in (x_0 + \delta \bar {\mbox{\bf B}})\cap A$ and $ x^B \in (x_0 + \delta \bar {\mbox{\bf B}})\cap B $
with $x^A\not= x^B$. Then the vector
$$v:= \frac{x^B-x^A}{\| x^B-x^A\|}$$
is of norm one, and therefore there exist vectors $w^A\in G_A(x^A)$, $\| w^A\| \le M$ and $w^B\in T_B(x^B)$ such that
$\left\| v-\left( w^A-w^B\right)\right\| \le \alpha$.
Then $w^B\in T_B(x^B)$ implies the  existence of sequences $t_m \searrow 0$ and $w_m^B \to w^B$ such that
	$x^B +t_m w_m^B \in B \, . $
	Since $w^A$  belongs to the derivable  tangent cone to $A$ at $x^A$, we have that for all small enough $t>0$ there exists $w_t^A \in X$, such that $x^A +t w_t^A \in A$ and $w_t^A \to w^A$ as ${t\searrow 0}$.
	Therefore, $x^A +t_m w_m^A \in A$ for $m\in \mathbb{N}$ large enough and $w_m^A \to w^A$
	(here $ w_m^A:=w_{t_m}^A $). Moreover,
$$\| x^A-x^B +t_m(w_m^A-w_m^B)\| \le $$ $$\le \| x^A-x^B + t_m v\| +t_m\|w^A-w^B - v\|+t_m\|w_m^A-w^A\|+t_m\|w_m^B-w^B)\| \le$$
$$\le \| x^A-x^B\| -t_m +t_m \alpha +t_m\|w_m^A-w^A\|+t_m\|w_m^B-w^B \| \ .$$
Then for all $m$ big enough (for which $\|w_m^A-w^A\|\le (1-\alpha-\eta)/2$, $\|w_m^B-w^B\|\le (1-\alpha-\eta)/2$) the estimate
 $$\| x^A-x^B +t_m(w_m^A-w_m^B)\| \le \| x^A-x^B\| -t_m \eta$$
 holds true. It remains to note that
 $$\| w_m^A\| \le \| w^A\| +\frac{1-\alpha -\eta}{2}\le M+1 \mbox{ and } $$ $$\| w_m^B\| \le \| w^B\| +\frac{1-\alpha -\eta}{2}\le \alpha +\| w^A\| +\| v \| +1\le M+3 \ .$$
\end{proof}

\begin{rem}
	It is also remarkable that in 1982 subtransversality is proven to be a sufficient condition for a tangential intersection property for Dubovitzki-Milyutin tangent cones (even with equality) by Dolecki in \cite{Dolecki}. The word ``subtransversality'' is not mentioned, but the distance inequality from its definition is used instead.
\end{rem}

\section{Massive sets}

The classical concept of compactly epi-Lipschitz sets in Banach spaces  was introduced by J.M. Borwein and H.M. Strojwas in 1985 in \cite{BS85} as appropriate for investigating tangential approximations of the Clarke tangent cone in Banach spaces. Since then, it has been an important notion in nonsmooth analysis and has been frequently used in qualification conditions for obtaining normal intersection properties and calculus rules concerning limiting normal Fr$\acute{\mbox{\rm e}}$chet cones and subdifferentials (in Asplund spaces, cf. \cite{Mord} and \cite{Penot}) and $G$-normal cones and $G$-subdifferentials (in general Banach spaces, cf. \cite{Ioffe}). Compactly epi-Lipschitz sets are called \textit{massive} in \cite{IoffeBook}. Here is the corresponding

\begin{definition}
	Let $A$ be a closed subset of the Banach space $X$ and $x_0\in A$. We say that $A$ is compactly epi-Lipschitz (massive) at $x_0$, if there exist $\varepsilon>0$, $\delta >0$ and a compact set $K \subset X$, such that for all $x \in A \cap (x_0 + \delta \bar {\mbox{\bf B}})$, for all $v \in X$, $\|v\|\le \varepsilon$ and for all $t \in [0, \delta]$, there exists $k \in K$, for which $x + t(v-k)\in A$.
\end{definition}

Using the concept of   massive sets, we are able to prove the following sufficient condition for tangential transversality.

\begin{thm}\label{almostm}
	Let $A$ and $B$ be closed subsets of the Banach space $X$ and let $x_0 \in A \cap B$.  Let $A$ be   massive and $\hat T_{A}(x_0)- \hat T_B(x_0)$ be dense in $X$. Then $A$ and $B$ are tangentially transversal at $x_0$.
\end{thm}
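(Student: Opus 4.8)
The plan is to verify the definition of tangential transversality directly, manufacturing the admissible perturbations $w^A_m$ and $w^B_m$ by concatenating one ``massive step'' of $A$ with Clarke‑type steps realizing suitable elements of $\hat T_A(x_0)$ and of $\hat T_B(x_0)$; the role of the latter is precisely to cancel the ``compact cost'' incurred by a massive step. No variational tool is needed — only the uniform description of the Clarke tangent cone already used in the proof of Proposition~\ref{clarke_t}.

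First I would record the data of the massiveness of $A$ at $x_0$: constants $\varepsilon>0$, $\delta_0>0$ and a compact set $K\subset X$, and put $R:=\sup_{k\in K}\|k\|<\infty$. Fix a tolerance $\rho\in(0,\varepsilon/8]$ and, using compactness of $K$, a finite cover $K\subset\bigcup_{i=1}^N\bigl(k_i+\rho\,{\mbox{\bf B}}\bigr)$ with $k_i\in K$. Since $\hat T_A(x_0)-\hat T_B(x_0)$ is dense in $X$, for every $i$ one can choose $a_i\in\hat T_A(x_0)$ and $b_i\in\hat T_B(x_0)$ with $\|k_i-(a_i-b_i)\|<\rho$; set $R':=\max_i\max\{\|a_i\|,\|b_i\|\}$. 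Applying the uniform characterization of the Clarke tangent cone to each of the finitely many vectors $a_i$ (resp.\ $b_i$) with tolerance $\rho$, I obtain one number $\delta_1>0$ such that every $a_i$ can be followed, up to an error of norm at most $\rho$, from any point of $A$ lying in $x_0+\delta_1\bar{\mbox{\bf B}}$ over any step in $(0,\delta_1)$, and likewise for every $b_i$ from points of $B$. I then set $M:=\varepsilon+R+R'+\rho$, $\eta:=\varepsilon/2$ and $\delta:=\tfrac12\min\{\delta_0,\delta_1\}$, and claim that $A$ and $B$ are tangentially transversal at $x_0$ with these constants.

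Given distinct points $x^A\in(x_0+\delta\bar{\mbox{\bf B}})\cap A$ and $x^B\in(x_0+\delta\bar{\mbox{\bf B}})\cap B$, put $v:=(x^B-x^A)/\|x^B-x^A\|$ and pick any sequence $t_m\searrow0$ whose first (hence every) term satisfies $t_m<\min\{\delta_0,\delta_1\}$, $t_m(\varepsilon+R)\le\delta_1/2$ and $t_m\varepsilon<\|x^A-x^B\|$. For each $m$: massiveness of $A$ at $x^A$ with the perturbation $\varepsilon v$ (of norm $\varepsilon$) and step $t_m$ yields $k_m\in K$ with $y_m:=x^A+t_m(\varepsilon v-k_m)\in A$, and $\|y_m-x_0\|\le\delta+t_m(\varepsilon+R)\le\delta_1$; choosing $i$ with $\|k_m-k_i\|<\rho$ and following $a_i$ from $y_m$ gives $e_m$, $\|e_m\|\le\rho$, with $z_m:=y_m+t_m(a_i+e_m)\in A$, so $w^A_m:=\varepsilon v-k_m+a_i+e_m$ satisfies $x^A+t_m w^A_m=z_m\in A$ and $\|w^A_m\|\le\varepsilon+R+R'+\rho=M$; following $b_i$ from $x^B$ gives $e'_m$, $\|e'_m\|\le\rho$, with $x^B+t_m w^B_m\in B$ for $w^B_m:=b_i+e'_m$, and $\|w^B_m\|\le R'+\rho\le M$. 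Now $\varepsilon v-(w^A_m-w^B_m)=\bigl(k_m-(a_i-b_i)\bigr)-e_m+e'_m=:h_m$, so $\|h_m\|\le\|k_m-k_i\|+\|k_i-(a_i-b_i)\|+\|e_m\|+\|e'_m\|<4\rho\le\varepsilon/2$. Since $x^A-x^B=-\|x^A-x^B\|\,v$ and $w^A_m-w^B_m=\varepsilon v-h_m$, the triangle inequality gives
$$\|x^A-x^B+t_m(w^A_m-w^B_m)\|=\bigl\|-(\|x^A-x^B\|-t_m\varepsilon)v-t_m h_m\bigr\|$$
$$\le\|x^A-x^B\|-t_m\varepsilon+t_m\|h_m\|\le\|x^A-x^B\|-t_m(\varepsilon-4\rho)\le\|x^A-x^B\|-t_m\eta,$$
which is exactly the inequality in the definition of tangential transversality; this completes the construction.

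The one genuinely delicate point is the \emph{uniformity of the bound} $M$ over all admissible pairs $(x^A,x^B)$: the elements of $\hat T_A(x_0)$ and $\hat T_B(x_0)$ needed to compensate the term $-k_m$ of a massive step could a priori have arbitrarily large norm, whereas tangential transversality requires a single fixed $M$. This is what the finite $\rho$‑net $\{k_i\}$ of the compact set $K$ buys us: the triple $(\varepsilon,\delta_0,K)$ depends only on $x_0$, so for every pair the relevant tangent vectors can be taken from the once‑and‑for‑all chosen finite families $\{a_i\},\{b_i\}$, whose norms are bounded by $R'$. A secondary, routine ingredient is that the perturbations must be realized not at $x_0$ but at the moving points $x^A$, $x^B$ and at the intermediate point $y_m$; this forces the use of the ``from nearby points'' form of the Clarke tangent cone rather than its pointwise definition, and it is precisely the fact that the hypothesis is phrased with the Clarke cones $\hat T_A(x_0)$, $\hat T_B(x_0)$ that makes this legitimate.
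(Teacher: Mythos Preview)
Your proof is correct and follows essentially the same route as the paper's: a massive step in the direction $\varepsilon v$ from $x^A$, a finite net of the compact $K$ to preselect finitely many Clarke tangent pairs $(a_i,b_i)$ via the density hypothesis, then a Clarke step along $a_i$ in $A$ (from the intermediate point $y_m$) and along $b_i$ in $B$ (from $x^B$) to cancel the $-k_m$ term up to a controllable error. The only differences are cosmetic choices of parameters (your $\rho\le\varepsilon/8$ with $\eta=\varepsilon/2$ versus the paper's $\varepsilon q$-net with $\eta=\varepsilon(1-q)/4$); in particular your observation about why the finite net is what secures a uniform bound $M$ is exactly the point of the paper's argument as well.
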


\begin{proof}Let $\varepsilon>0$, $\delta >0$ and  the set $K$  be those from the definition of $A$ --  massive at $x_0$.
Let $q\in (0,1)$ be arbitrary. Because the set $K$ is compact and $\varepsilon>0$, there exists a finite
  $\varepsilon q - $net $F:= \{k_1, k_2, \dots, k_n \}$ for $K$.
	    Let us set $\eta := \frac{\varepsilon(1-q)}{4}$.
	
	Due to the density of $\hat T_A(x_0)-\hat T_B(x_0)$ in $X$, we obtain that for all $s \in \{1, \dots, n\}$, there exist $w_s^A \in \hat T_A(x_0)$ and $w_s^B \in \hat T_B(x_0)$ such that
	\begin{equation*}\label{dens}
	\|k_s - (w_s^A - w_s^B)\| \leq \eta \, .
	\end{equation*}
	
	From the definition of Clarke tangent cone for $w_s^A \in \hat T_A(x_0)$, we have that there exists $\delta_s^A>0$ such that for all $x \in (x_0 + \delta_s^A \bar {\mbox{\bf B}})\cap A$ and for all $t \in (0, \delta_s^A)$ it holds true that
	$$(x+t(w_s^A+\eta \bar {\mbox{\bf B}})) \cap A \neq \emptyset \, .$$
	Analogously, for $w_s^B \in \hat T_B(x_0)$, we have that there exists $\delta_s^B>0$ such that for all $x \in (x_0 + \delta_s^B \bar {\mbox{\bf B}})\cap B$ and for all $t \in (0, \delta_s^B)$ it holds true that
	$$(x+t(w_s^B+\eta \bar {\mbox{\bf B}})) \cap B \neq \emptyset \, .$$
	
	We set $N:= \max\{\|k_s\| \, : \,  s = 1, \dots, n \}$, $M:= \max\{\|w_s^A\|, \|w_s^B\| \, : \,  s = 1, \dots, n \} + N + \varepsilon(1+q)+ \eta$
	and
	$$\bar \delta:=\min\{\varepsilon, \delta, \delta_s^B, \frac{\delta_s^A}{1+N +2\varepsilon} \, : \,  s = 1, \dots, n\} \, .$$
	Let  $x^A \in (x_0 + \bar \delta \bar {\mbox{\bf B}})\cap A$ and  $x^B \in (x_0 + \bar \delta \bar {\mbox{\bf B}})\cap B$ with $x^A\not=x^B$ and let  $t\in (0, \min \{\bar \delta, \frac{\|x^A-x^B\|}{\varepsilon} \})$ be arbitrary.
	Let us set
	$$v:=-\frac{x^A-x^B}{\|x^A-x^B\|} \, .$$
	Then, $\|\varepsilon v\| = \varepsilon$, $0<t< \bar \delta \le \varepsilon$, $x^A \in (x_0 + \bar \delta \bar {\mbox{\bf B}})\cap A \subset (x_0 + \delta \bar {\mbox{\bf B}})\cap A$ and therefore there exists $k\in K$ such that
	$$\tilde x^A := x^A +t(\varepsilon v -k) \in A \, .$$
	Since $k\in K$, then $\|k-k_s\| \le \varepsilon q$ for some $s \in \{ 1, \dots, n \}$. We estimate
	\begin{align*}
	\|\tilde x^A - x_0\| &\le \|x^A-x_0\| + t\|\varepsilon v - k\|  \le \bar \delta + \bar \delta (\|\varepsilon v\| + \|k_s\| +\| k -k_s \|)\\
	&\le \bar \delta (1+ \varepsilon+N+\varepsilon q) <  \bar \delta (1+N+2\varepsilon) \le \delta_s^A
	\end{align*}
	and therefore
	$$(\tilde x^A  + t (w_s^A+ \eta \bar {\mbox{\bf B}})) \cap A \ne \emptyset \, . $$
	Then, there exists $w^A \in X$, $\|w^A-w_s^A\| \le \eta$, such that $ \tilde x^A + t w^A \in A$ and we obtain
	\begin{align*}
	\tilde x^A + t w^A = x^A + t(\varepsilon v -k) + t w^A = x^A + t (\varepsilon v -k +w^A) \in A
	\end{align*}
	and
	$$\|\varepsilon v -k +w^A\| = \|\varepsilon v -k_s+(k_s -k) +(w^A-w^A_s) + w^A_s \| \leq \varepsilon +  N +\varepsilon q+ \eta + \|w^A_s\| \leq M \, .$$
	
	For $x^B \in (x_0 + \bar \delta \bar {\mbox{\bf B}})\cap B \subset (x_0 + \delta_s^B \bar {\mbox{\bf B}})\cap B$ we have that
	$$ (  x^B  + t (w_s^B+ \eta \bar {\mbox{\bf B}})) \cap B \ne \emptyset \, ,$$
	which implies that there exists $w^B \in X$, $\|w^B-w_s^B\| \le \eta$, such that $$ x^B + t w^B \in B \, .$$
	Obviously $\|w^B\|\leq \|w^B_s\|+ \eta < M$.
	
	We estimate
	\begin{align*}
	\|(x^A& +  t \left(\varepsilon v - k +  w^A \right))-(x^B +  t  w^B)\| \\
	&=\|x^A -x^B+  t \varepsilon v  +  t(w^A - k  -  w^B)\| \le
	\|x^A -x^B+  t \varepsilon v\|  +  \|t(w^A - k  -  w^B)\|\\
	&\le \Big\|x^A -x^B-t\varepsilon \frac{x^A-x^B}{\|x^A-x^B\|}  \Big\| + \\
	& \ \ \ \ \ \ \ \ \ \ + t\|w^A-w^A_s
	+  w^B_s-w^B +k_s-k + (w^A_s-w^B_s - k_s) \|\\
	&\le \|x^A -x^B\| \Big|1-\frac{t\varepsilon}{\|x^A-x^B\|}  \Big| +\\
	& \ \ \ \ \ \ \ \ \ \ + t\left(\|w^A-w^A_s\| +
	\| w^B_s-w^B\| +\|k_s-k\| + \|(w^A_s-w^B_s) - k_s \|\right)\\
	&\le \|x^A -x^B\|-t\varepsilon+t(3\eta+\varepsilon q)=\|x^A -x^B\|-t\left(\varepsilon-\varepsilon q -3\frac{\varepsilon(1-q)}{4}\right)\\
	&= \|x^A -x^B\|-t\eta \, ,
	\end{align*}
	where $\eta:= \frac{\varepsilon(1-q)}{4}>0$.
	
	This verifies the definition of $A$ and $B$ -- tangentially transversal at $x_0$ with constants $M>0$, $\delta >0$ and  $\eta >0$.
\end{proof}

\begin{corol}\label{almostn}
	Let $A$ and $B$ be closed subsets of the Banach space $X$ and let $x_0 \in A \cap B$.  Let $A$ be   massive and $\hat T_{A}(x_0)- \hat T_B(x_0)$ be dense in $X$. Then,
	\begin{equation}\label{Gnor}
	N_{A\cap B}(x_0) \subset N_A(x_0) + N_B(x_0) \, ,
	\end{equation}
	where $N_S(x)$ is the $G$-normal cone to the set $S$ at the point $x$.
\end{corol}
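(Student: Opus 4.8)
The plan is to deduce the statement from Theorem \ref{almostm} together with the available $G$-normal calculus. First I would note that the hypotheses are verbatim those of Theorem \ref{almostm}, so $A$ and $B$ are tangentially transversal at $x_0$; Proposition \ref{subtr} then upgrades this to subtransversality of $A$ and $B$ at $x_0$. This is the entire ``primal'' part of the argument and the only place where the constructive Theorem \ref{ttth} enters (through Proposition \ref{subtr}).

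Second, since $A$ is massive, the subtransversality of $A$ and $B$ at $x_0$ brings us within the scope of the $G$-normal intersection theorems in Banach spaces (cf. Theorem 7.13 in \cite{IoffeBook}; see also \cite{Ioffe}, and \cite{Penot} for the Asplund-space counterpart with limiting normal cones), which yield both the inclusion \eqref{Gnor} and the weak$^*$-closedness of its right-hand side, thus recovering the known intersection rule for $G$-normal cones under a compactly-epi-Lipschitz assumption. As a cross-check that the qualification built into those theorems is present here: for a massive set $N_A(x_0)=\hat T_A(x_0)^{\circ}$ (the negative polar of the Clarke tangent cone, cf. \cite{BS85}, \cite{Ioffe}), while $N_B(x_0)\subset\hat T_B(x_0)^{\circ}$ always, and $\hat T_A(x_0)^{\circ}\cap(-\hat T_B(x_0)^{\circ})=(\hat T_A(x_0)-\hat T_B(x_0))^{\circ}$; hence density of $\hat T_A(x_0)-\hat T_B(x_0)$ in $X$ forces $N_A(x_0)\cap(-N_B(x_0))=\{\mathbf{0}\}$.

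The delicate point, and the reason massiveness cannot be dispensed with, is exactly this second step. In a general Banach space the $G$-normal cone is not the polar of any single primal tangent cone; subtransversality by itself does not imply the $G$-normal intersection property, and even the Clarke tangential inclusion of Proposition \ref{clarke_t}, once polarised, produces only the weak$^*$-closure of a sum and a Clarke- rather than a $G$-normal estimate. The compactness packaged in massiveness is what lets one identify $N_A(x_0)$ with a polar, run the $G$-normal calculus, and conclude that $N_A(x_0)+N_B(x_0)$ is already weak$^*$-closed, which is why ``$A$ massive'' appears here but is absent from the purely tangential Propositions \ref{boul} and \ref{clarke_t}.
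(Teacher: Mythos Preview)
Your proposal is correct and mirrors the paper's three-line proof exactly: Theorem \ref{almostm} gives tangential transversality, Proposition \ref{subtr} gives subtransversality, and Theorem 7.13 in \cite{IoffeBook} yields \eqref{Gnor}. The only discrepancy is your commentary that massiveness is essential for the \emph{second} step; in the paper (and in its introduction) Theorem 7.13 is invoked from subtransversality alone, so massiveness is used only in the first step, as the hypothesis enabling Theorem \ref{almostm}.
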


\begin{proof}
	Due to Theorem \ref{almostm}, $A$ and $B$ are tangentially transversal. Tangential transversality implies subtransversality  due to Proposition \ref{subtr} and subtransversality implies \eqref{Gnor} due to Theorem 7.13 in \cite{IoffeBook}.
\end{proof}

Let $f_1: X  \rightarrow \mathbb{R}\cup \{+\infty\}$ and  $f_2: X  \rightarrow \mathbb{R}\cup \{+\infty\}$ be lower semicontinuous and proper and  $x_0 \in X$ be in $dom f_1 \cap dom f_2$. We are going to apply the results from this section to the closed sets
$$C_1 := \{ (x, r_1, r_2) \in X\times \mathbb{R}\times \mathbb{R} \ | \ r_1 \ge f_1(x) \}$$
and
$$C_2 := \{ (x, r_1, r_2) \in X\times \mathbb{R}\times \mathbb{R} \ | \ r_2 \ge f_2(x) \} $$
in order to obtain a sum rule for the $G$-subdifferential. This is the approach introduced by Ioffe in \cite{Ioffe84}. We will need the following technical lemma.

\begin{lem}\label{qual_cons}
	The following are equivalent
	\begin{enumerate}[(i)]
		\item ${\hat T_{epi f_1}(x_0, f_1(x_0))- \hat T_{epi f_2}(x_0,  f_2(x_0))} $ is dense in $ X\times \mathbb{R}$
		\item ${\hat T_{C_1}(x_0, f_1(x_0), f_2(x_0))- \hat T_{C_2}(x_0, f_1(x_0), f_2(x_0))} $ is dense in $ X\times \mathbb{R}\times \mathbb{R}$
		\item $\{N_{C_1}^C(x_0, f_1(x_0), f_2(x_0)) \} \cap \{-N_{C_2}^C(x_0, f_1(x_0), f_2(x_0)) \} = \{ (\mathbf{0}, 0, 0) \} \, ,$ where $N_S^C(x)$ is the Clarke normal cone to the set $S$ at the point $x$
		\item $\{\partial_C^\infty f_1(x_0) \} \cap \{-\partial_C^\infty f_2(x_0) \} = \{ \mathbf{0} \} \, ,$   where  $\partial_C^\infty$ is the Clarke singular subdifferential.
	\end{enumerate}
\end{lem}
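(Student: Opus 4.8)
The plan is to establish the four equivalences in the order (i) $\Leftrightarrow$ (ii) $\Leftrightarrow$ (iii) $\Leftrightarrow$ (iv). Throughout I would use the following standard facts about Clarke cones in a Banach space: the Clarke tangent cone of a Cartesian product equals the product of the Clarke tangent cones of the factors (and likewise for Clarke normal cones); the Clarke normal cone is the polar of the Clarke tangent cone, $N_S^C(x)=(\hat T_S(x))^\circ$, while conversely any closed convex cone is recovered as the polar of its polar (bipolar theorem); the ``upward'' direction $(\mathbf{0},1)\in X\times\mathbb{R}$ always belongs to $\hat T_{epi f}(x_0,f(x_0))$; and $\partial_C^\infty f(x_0)=\{\xi\in X^*:(\xi,0)\in N_{epi f}^C(x_0,f(x_0))\}$.

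Write $Z:=X\times\mathbb{R}\times\mathbb{R}$ and $z_0:=(x_0,f_1(x_0),f_2(x_0))$. Since $C_1=epi f_1\times\mathbb{R}$ with the extra factor lying along the $r_2$-axis, the product rule gives $\hat T_{C_1}(z_0)=\hat T_{epi f_1}(x_0,f_1(x_0))\times\mathbb{R}$; after interchanging the last two coordinates $C_2$ becomes $epi f_2\times\mathbb{R}$, so $\hat T_{C_2}(z_0)=\{(v,\rho,\sigma)\in Z:\rho\in\mathbb{R},\ (v,\sigma)\in\hat T_{epi f_2}(x_0,f_2(x_0))\}$. Because $(\mathbf{0},1)$ lies in each epigraph tangent cone and these cones are convex, the last coordinate of a vector of $\hat T_{epi f_i}(x_0,f_i(x_0))$ may be increased at will; hence, setting $P:=\pi_X\big(\hat T_{epi f_1}(x_0,f_1(x_0))\big)-\pi_X\big(\hat T_{epi f_2}(x_0,f_2(x_0))\big)$ with $\pi_X\colon X\times\mathbb{R}\to X$ the projection, one obtains $\hat T_{epi f_1}(x_0,f_1(x_0))-\hat T_{epi f_2}(x_0,f_2(x_0))=P\times\mathbb{R}$ and $\hat T_{C_1}(z_0)-\hat T_{C_2}(z_0)=P\times\mathbb{R}\times\mathbb{R}$. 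Since the closure of a Cartesian product is the product of the closures, both (i) and (ii) are equivalent to $\overline{P}=X$, which proves (i) $\Leftrightarrow$ (ii).

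For (ii) $\Leftrightarrow$ (iii) I would pass to polars. The set $\hat T_{C_1}(z_0)-\hat T_{C_2}(z_0)$ is a convex cone, and a convex cone is dense in $Z$ if and only if its closure is $Z$, which by the bipolar theorem happens if and only if its polar is $\{\mathbf{0}\}$. But $\big(\hat T_{C_1}(z_0)-\hat T_{C_2}(z_0)\big)^\circ=\big(\hat T_{C_1}(z_0)\big)^\circ\cap\big(-(\hat T_{C_2}(z_0))^\circ\big)=N_{C_1}^C(z_0)\cap\big(-N_{C_2}^C(z_0)\big)$, which is precisely the object appearing in (iii). Finally, for (iii) $\Leftrightarrow$ (iv) the same product structure applied to normal cones yields $N_{C_1}^C(z_0)=N_{epi f_1}^C(x_0,f_1(x_0))\times\{0\}$ and $N_{C_2}^C(z_0)=\{(\xi,0,-\mu)\in Z^*:(\xi,-\mu)\in N_{epi f_2}^C(x_0,f_2(x_0))\}$; an element of $N_{C_1}^C(z_0)\cap\big(-N_{C_2}^C(z_0)\big)$ must therefore have both scalar components equal to $0$, so it reduces to a pair $\xi\in\partial_C^\infty f_1(x_0)$ with $-\xi\in\partial_C^\infty f_2(x_0)$. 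Consequently that intersection is $\{\mathbf{0}\}$ exactly when $\partial_C^\infty f_1(x_0)\cap\big(-\partial_C^\infty f_2(x_0)\big)=\{\mathbf{0}\}$.

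None of the steps is hard; the one place where care is needed — and the natural source of an indexing or sign slip — is the coordinate interchange describing $\hat T_{C_2}(z_0)$ and $N_{C_2}^C(z_0)$, together with the observation that it is precisely the presence of $(\mathbf{0},1)$ in the epigraph tangent cones that allows one to ``fatten'' the differences of cones into full Cartesian products, thereby collapsing conditions (i) and (ii) to the single density statement $\overline{P}=X$.
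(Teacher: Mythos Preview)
Your proof is correct and follows essentially the same approach as the paper: identify the product structure of the Clarke tangent and normal cones to $C_1$ and $C_2$, use polarity (bipolar theorem) to pass between density of a difference of tangent cones and triviality of the intersection of normal cones, and reduce the normal-cone statements to the singular subdifferential condition. The only minor difference is that for (i)~$\Leftrightarrow$~(ii) you argue directly on the primal side via the common projection set $P=\pi_X(\hat T_{epi f_1})-\pi_X(\hat T_{epi f_2})$, whereas the paper obtains this equivalence on the dual side by applying polarity twice (once for (i)~$\Leftrightarrow$ the condition $N_{epi f_1}^C\cap(-N_{epi f_2}^C)=\{(\mathbf{0},0)\}$, once for (ii)~$\Leftrightarrow$~(iii)) and linking both through (iv); your route is slightly more explicit but not substantively different.
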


\begin{proof}
	We have that
	$$N_{C_1}^C(x_0, f_1(x_0), f_2(x_0)) = \{ (x^*, s_1, 0) \in X^* \times \mathbb{R}\times \mathbb{R} \ | \  (x^*, s_1) \in N_{epi f_1}^C(x_0, f_1(x_0)) \} $$
	and
	$$N_{C_2}^C(x_0, f_1(x_0), f_2(x_0)) = \{ (x^*, 0, s_2) \in X^* \times \mathbb{R}\times \mathbb{R} \ | \  (x^*, s_2) \in N_{epi f_2}^C(x_0, f_2(x_0)) \} \, . $$
	The reals $s_1$ and $s_2$ in the expressions above are non-positive by polarity, since the vector $(\mathbf{0}, 1)$ is always contained in a tangent cone to the epigraph of a function. Using this and the definition of singular subdifferential, we obtain that $(iv)$ is equivalent to $(iii)$, which is equivalent to $(ii)$ by polarity.
	
	Using again that $(iv)$ holds if and only if
	$$\{N_{epi f_1}^C(x_0, f_1(x_0)) \} \cap \{-N_{epi f_2}^C(x_0, f_2(x_0)) \} = \{ (\mathbf{0}, 0) \} \, , $$
	we obtain that it is equivalent to $(i)$ by polarity.
\end{proof}

\begin{corol}\label{cor46}
	Let $f_1: X  \rightarrow \mathbb{R}\cup \{+\infty\}$ and  $f_2: X  \rightarrow \mathbb{R}\cup \{+\infty\}$ be lower semicontinuous and proper and  $x_0 \in X$ be in $dom f_1 \cap dom f_2$. Let $epi f_1$ be   massive and
	\begin{equation}\label{qual_con}
	\{\partial_C^\infty f_1(x_0) \} \cap \{-\partial_C^\infty f_2(x_0) \} = \{ \mathbf{0} \} \, .
	\end{equation}
	
	Then,
	$$\partial_G (f_1+f_2)(x_0) \subset \partial_G f_1(x_0) + \partial_G f_2(x_0) \, ,$$
	where $\partial_G$ is the $G$-subdifferential.
\end{corol}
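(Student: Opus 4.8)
The plan is to lift the sum rule to a $G$-normal intersection property for the sets $C_1$ and $C_2$ in $X\times\mathbb{R}\times\mathbb{R}$ introduced above --- the device of Ioffe in \cite{Ioffe84} --- and to obtain that intersection property from Corollary \ref{almostn}. Throughout, put $p_0:=(x_0,f_1(x_0),f_2(x_0))\in C_1\cap C_2$.

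First I would check that the hypotheses of Corollary \ref{almostn} hold for the pair $C_1,C_2$. After reordering coordinates, $C_1$ is the product $epi f_1\times\mathbb{R}$ of the massive set $epi f_1$ with a line, and such a product is again massive at $p_0$ (one just augments the compact set from the definition of massivity by a closed bounded interval in the new direction); this is routine. On the other hand, Lemma \ref{qual_cons} tells us that the qualification condition \eqref{qual_con} is precisely the assertion that $\hat T_{C_1}(p_0)-\hat T_{C_2}(p_0)$ is dense in $X\times\mathbb{R}\times\mathbb{R}$. Hence Corollary \ref{almostn} applies and yields
$$N_{C_1\cap C_2}(p_0)\ \subset\ N_{C_1}(p_0)+N_{C_2}(p_0)\, ,$$
with $N$ the $G$-normal cone.

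It remains to translate this inclusion into a statement about $G$-subdifferentials. Using the product structure of $C_1$, $C_2$ and the product rule for the $G$-normal cone,
$$N_{C_1}(p_0)=\bigl\{(x^*,s,0):(x^*,s)\in N_{epi f_1}(x_0,f_1(x_0))\bigr\}\, ,$$
$$N_{C_2}(p_0)=\bigl\{(x^*,0,s):(x^*,s)\in N_{epi f_2}(x_0,f_2(x_0))\bigr\}\, ,$$
so any vector of $N_{C_1}(p_0)+N_{C_2}(p_0)$ whose last two coordinates both equal $-1$ has the form $(x_1^*+x_2^*,-1,-1)$ with $x_i^*\in\partial_G f_i(x_0)$. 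Consequently, given $x^*\in\partial_G(f_1+f_2)(x_0)$ --- equivalently, by the epigraphical description of the $G$-subdifferential, $(x^*,-1)\in N_{epi(f_1+f_2)}(x_0,(f_1+f_2)(x_0))$ --- it suffices to prove $(x^*,-1,-1)\in N_{C_1\cap C_2}(p_0)$; combined with the displayed inclusion this gives $x^*=x_1^*+x_2^*\in\partial_G f_1(x_0)+\partial_G f_2(x_0)$, as desired.

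I expect this last implication to be the only real obstacle. Its elementary root is that the continuous linear surjection $(x,r_1,r_2)\mapsto(x,r_1+r_2)$ carries $C_1\cap C_2$ onto $epi(f_1+f_2)$ (here one uses $x_0\in dom f_1\cap dom f_2$) and $p_0$ onto $(x_0,(f_1+f_2)(x_0))$, so that the affine inequality witnessing an elementary (Dini--Hadamard type) normal $(x^*,-1)$ to $epi(f_1+f_2)$ at a point near $(x_0,(f_1+f_2)(x_0))$, tested against perturbations $(h,k_1+k_2)$ coming from perturbations $(h,k_1,k_2)$ of a nearby point of $C_1\cap C_2$, is exactly the inequality witnessing that $(x^*,-1,-1)$ is an elementary normal to $C_1\cap C_2$ there; since lower semicontinuity forces $f_i(x_k)\to f_i(x_0)$ along any sequence with $x_k\to x_0$ and $(f_1+f_2)(x_k)\to(f_1+f_2)(x_0)$, the corresponding base points of $C_1\cap C_2$ converge to $p_0$, and one passes to the limit in the construction of the $G$-normal cone. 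The care needed is the bookkeeping with Ioffe's definition of $N_{C_1\cap C_2}$ through approximate normals along finite-dimensional subspaces and weak$^*$ upper limits; alternatively, the corresponding calculus lemma may simply be quoted from \cite{IoffeBook}. The product computation above and the massivity of $C_1$ are routine.
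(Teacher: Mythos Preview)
Your proposal is correct and follows essentially the same route as the paper: both lift to $C_1,C_2\subset X\times\mathbb R\times\mathbb R$, check that $C_1$ is massive, use Lemma \ref{qual_cons} to convert \eqref{qual_con} into the density condition, invoke Corollary \ref{almostn}, and read off the sum rule from the product structure of $N_{C_i}(p_0)$.

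The only difference is in the step you flag as ``the only real obstacle''. Rather than arguing directly that $(x^*,-1,-1)\in N_{C_1\cap C_2}(p_0)$ via the linear surjection $(x,r_1,r_2)\mapsto(x,r_1+r_2)$, the paper introduces the intermediate set $C:=\{(x,r_1,r_2):r_1+r_2\ge(f_1+f_2)(x)\}\supset C_1\cap C_2$, notes that $x^*\in\partial_G(f_1+f_2)(x_0)\iff(x^*,-1,-1)\in N_C(p_0)$ (which is exactly your linear-change-of-variables observation, since $C$ is $epi(f_1+f_2)\times\mathbb R$ after the substitution $u=r_1+r_2$), and then quotes Lemma~5.5 of \cite{Ioffe89} for the inclusion $N_C(p_0)\subset N_{C_1\cap C_2}(p_0)$. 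This is precisely the ``corresponding calculus lemma'' you suggest might be cited, and your sketch via elementary normals and the surjection is effectively its content; so the two proofs coincide up to whether that lemma is cited or reproved.
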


\begin{proof}
	Let us set
	$$C_i := \{ (x, r_1, r_2) \in X\times \mathbb{R}\times \mathbb{R} \ | \ r_i \ge f_i(x) \} \mbox{ for } i=1,2 \, .$$
	
	We have that the qualification condition \eqref{qual_con} is equivalent to
	\begin{equation}\label{tang}
	\overline{\hat T_{C_1}(x_0, f_1(x_0), f_2(x_0))- \hat T_{C_2}(x_0, f_1(x_0), f_2(x_0))} = X\times \mathbb{R}\times \mathbb{R} \, .
	\end{equation}
	due to Lemma \ref{qual_cons}.
	
	Since $C_1$ is almost massive, we can apply Corollary \ref{almostn} and obtain that
	$$ N_{C_1\cap C_2}(x_0, f_1(x_0), f_2(x_0)) \subset N_{C_1}(x_0, f_1(x_0), f_2(x_0)) + N_{C_2}(x_0, f_1(x_0), f_2(x_0)) \, .$$
	It is direct that
	$$C_1 \cap C_2 \subset C:= \{ (x, r_1, r_2) \in X\times \mathbb{R}\times \mathbb{R} \ | \ r_1+r_2 \ge f_1(x) +  f_2(x) \} $$
	and by Lemma 5.5 in \cite{Ioffe89}
	\begin{align*}
	N_C(x_0, f_1(x_0), &f_2(x_0)) \subset N_{C_1\cap C_2}(x_0, f_1(x_0), f_2(x_0)) \\
	&\subset N_{C_1}(x_0, f_1(x_0), f_2(x_0)) + N_{C_2}(x_0, f_1(x_0), f_2(x_0)) \, .
	\end{align*}
	Since $x^* \in \partial_G (f_1+f_2)(x_0) \iff (x^*, -1, -1) \in N_C(x_0, f_1(x_0), f_2(x_0))  $, the proof is complete.
\end{proof}

The following statement is an abstract Lagrange multiplier rule.

\begin{corol}\label{cor47}
	Let us consider the optimization problem
	$$f (x)\to \min \ \mbox{ subject to } \ x \in S \ ,$$
	where $f:X \longrightarrow \mathbb{R}  \cup \{+\infty\}$ is lower semicontinuous and proper and $S$ is a closed subset of the Banach space $X$.
	Let $x_0$ be a solution of the above problem. If $epi f$ is   massive at $(x_0, f(x_0))$,
	then there exists a   pair
	$(\xi, \eta)\in
	X^*  \times  \mathbb{R}$ such that
	\begin{enumerate}
		\item [(i)] $(\xi,\eta)\not = (\mbox{\bf 0},0)$;
		\item [(ii)] $\eta\in  \{0,1\}$;
		\item [(iii)] $\langle \xi ,v\rangle\ \le 0$  for every $v \in \hat T_S(x_0)  $;
		\item [(iv)] $\langle \xi, w \rangle +\eta s\ge 0$   for every $(w,s)\in \hat T_{epi f}(x_0, f(x_0))$.
	\end{enumerate}
	
\end{corol}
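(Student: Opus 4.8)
The plan is to deduce the corollary from the abstract Lagrange multiplier rule of Theorem~\ref{Lagrange}, applied with the \emph{Clarke} tangent cones playing the role of the closed convex cones appearing there. Concretely, I would take $\tilde C_{epi f}(x_0,f(x_0)) := \hat T_{epi f}(x_0,f(x_0))$ and $C_S(x_0) := \hat T_S(x_0)$. Both of these are closed convex cones; both are contained in the corresponding Bouligand tangent cones, since $\hat T_S(x_0)\subset T_S(x_0)$ in general; and both consist of derivable tangent vectors, since $\hat T_S(x_0)\subset G_S(x_0)$ in general. Hence the standing hypotheses of Theorem~\ref{Lagrange} are met, and its alternative (a)/(b) applies to this choice of cones.

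If the first branch occurs, i.e.\ if $\hat T_{epi f}(x_0,f(x_0)) - \hat T_S(x_0)\times(-\infty,0]$ is \emph{not} dense in $X\times\mathbb{R}$, then part~(a) of Theorem~\ref{Lagrange} directly produces a pair $(\xi,\eta)\in X^*\times\mathbb{R}$ satisfying the four conditions (i)--(iv); with the above identification of the cones, these are verbatim the four conclusions of the corollary, so nothing more is needed. Thus the whole task reduces to showing that, when $epi f$ is massive, the second branch cannot occur.

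Suppose therefore that $\hat T_{epi f}(x_0,f(x_0)) - \hat T_S(x_0)\times(-\infty,0]$ is dense in $X\times\mathbb{R}$, and set $\tilde S := S\times(-\infty,f(x_0)]$, a closed subset of $X\times\mathbb{R}$. The elementary inclusion $\hat T_S(x_0)\times(-\infty,0]\subset\hat T_{\tilde S}(x_0,f(x_0))$ holds — one checks it straight from the definition of the Clarke tangent cone on product sequences, using for the real factor that a non-positive increment keeps the point inside $(-\infty,f(x_0)]$ — so that $\hat T_{epi f}(x_0,f(x_0)) - \hat T_{\tilde S}(x_0,f(x_0))$ is dense in $X\times\mathbb{R}$ as well. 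Since $epi f$ is massive at $(x_0,f(x_0))$ by hypothesis, Theorem~\ref{almostm}, applied with $A:=epi f$ and $B:=\tilde S$, yields that $epi f$ and $\tilde S$ are tangentially transversal at $(x_0,f(x_0))$, hence subtransversal there by Proposition~\ref{subtr}. But part~(b) of Theorem~\ref{Lagrange}, under exactly this density hypothesis, asserts that $epi f$ and $\tilde S$ are \emph{not} subtransversal at $(x_0,f(x_0))$. This contradiction eliminates the second branch, so the first branch occurs and the corollary follows from part~(a).

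I do not expect a genuine obstacle here: the analytic content is already packaged in Theorem~\ref{Lagrange} and Theorem~\ref{almostm}, and the corollary amounts to the observation that massiveness of $epi f$ makes the dense branch of the alternative self-contradictory. The one point requiring a line of care is the compatibility of the two density hypotheses, i.e.\ the inclusion $\hat T_S(x_0)\times(-\infty,0]\subset\hat T_{\tilde S}(x_0,f(x_0))$, which is immediate from the definitions and which I would record as a short remark rather than a separate lemma.
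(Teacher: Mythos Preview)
Your proposal is correct and follows exactly the route the paper intends: the paper's own proof is the single sentence ``The corollary follows from Theorem~\ref{Lagrange} and Theorem~\ref{almostm},'' and you have simply unpacked this, choosing the Clarke cones in Theorem~\ref{Lagrange} and using Theorem~\ref{almostm} together with Proposition~\ref{subtr} to rule out alternative~(b). The one auxiliary observation you flag, namely $\hat T_S(x_0)\times(-\infty,0]\subset\hat T_{\tilde S}(x_0,f(x_0))$, is indeed immediate (in fact equality holds for Clarke cones of products), so there is no gap.
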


\begin{proof}
	The corollary follows from Theorem \ref{Lagrange} and Theorem \ref{almostm}.
\end{proof}

    Corollaries \ref{almostn} and  \ref{cor46} are
known results which are obtained in a different way (cf. \cite{Ioffe89}, \cite{JouThi95}).
To the best of our knowledge, Theorem \ref{almostm} (even replacing tangential transversality by subtransversality in the
conclusion) and Corollary \ref{cor47} are new.

\section{Conclusion}

The transversality-oriented
language is extremely natural and convenient in some parts of variational analysis, including subdifferential
calculus and nonsmooth optimization. It is our understanding that the notion of subtransversality is central in many considerations in the
field. For that reason it is important to verify the subtransversality assumption in different
nontrivial cases. We view the notion of tangential transversality introduced in this paper mainly as a very useful sufficient condition for subtransversality. Nevertheless, may be some of the following open questions deserve some attention:

\begin{enumerate}
\vspace*{-0.2cm}
\item[1.] {
Tangential transversality is an intermediate property between
transversality and subtransversality (cf. Section 2).   However, the exact relation between
this new concept and the established notions of  transversality, intrinsic trans\-ver\-sa\-lity (cf. [7])
and subtransversality is not clarified yet.}

\vspace*{-0.2cm}
\item[2.] {It would be useful    to  find some  dual characterization of
tangential trans\-ver\-sa\-lity.}

\vspace*{-0.2cm}
\item[3.] It would  be  interesting to avoid the transfinite induction in the proof of Theorem
2.3 and to find     a more traditional proof.

\vspace*{-0.2cm}
\item[4.] Is there  an example of two sets satisfying the assumptions of Theorem 5.3
which are not transversal?
\end{enumerate}

\begin{center}
%{\bf References}
\end{center}

%{\bf \large References}


\begin{thebibliography}{00}

%% \bibitem{label}
%% Text of bibliographic item



\bibitem{AF91} J.-P. Aubin, H. Frankowska, Set-Valued Analysis, Birkhäuser (1990)

\bibitem{BKRstt} M. Bivas, M. Krastanov, N. Ribarska, On strong  tangential transversality, preprint, 2018, https://arxiv.org/abs/1810.01814

\bibitem{BRV17} M. Bivas, N. Ribarska, M. Valkov, Properties of uniform tangent sets and Lagrange multiplier rule, Comptes rendus de l’Acade'mie bulgare des Sciences (2018), Vol 71, No7, pp.875-884

\bibitem{BS85} J.M. Borwein, H.M. Strojwas, Tangential approximations, Nonlinear Analysis: Theory, Methods \& Applications (1985)
Volume 9, Issue 12, Pages 1347-1366

\bibitem{four} F.H. Clarke, Y.S. Ledyaev, R.J. Stern, P.R. Wolenski, Nonsmooth Analysis and Control Theory,
Graduate Texts in Mathematics, Springer, New York (1998)

\bibitem{Dolecki} S. Dolecki, Tangency and differentiation, some applications of convergence theory, Ann. Math. Pura Appl, 130, 223–255 (1982)

\bibitem{DIL2015} D. Drusvyatskiy, A.D. Ioffe,  A.S. Lewis, Transversality and alternating projections for nonconvex
sets, Found. Comput. Math. 15, 1637–1651 (2015)


\bibitem{FA} M. Fabian, P. Habala, P. Hajek, V. Montesinos Santalucia, J. Pelant and V. Zizler, Functional Analysis and Infinite-Dimensional Geometry, Springer-Verlag New York (2001)


\bibitem{DG1}   V. Guillemin, A. Pollack, Differential Topology. Prentice-Hall Inc, Englewood Cliffs, N.J. (1974)

\bibitem{DG2} M. Hirsch, Differential Topology. Springer, New York (1976)

\bibitem{Ioffe84} A. Ioffe,  Approximate subdifferentials and applications I: The finite dimensional theory,
Trans. Amer. Math. Soc. 281 (1984), 389-416

\bibitem{Ioffe89} A. Ioffe, Approximate subdifferentials and applications III: The metric theory, Mathematica (1989), Volume 36, Issue 1, pp. 1-38

\bibitem{Ioffe} A. Ioffe, Transversality in Variational Analysis, J Optim Theory Appl (2017), 174(2), 343-366

\bibitem{IoffeBook} A. Ioffe, Variational Analysis of Regular Mappings: Theory and Applications, Springer Monographs in Mathematics, Springer (2017)

\bibitem{Jech} T. Jech, Set theory, Academic press (1978)
%\bibitem{HirLe} J.-B. Hiriart-Urruty and C. Lemar?chal, Fundamentals of Convex Analysis, Springer-Verlag Berlin Heidelberg(2001)

\bibitem{JouThi95} A Jourani, L Thibault, Extensions of subdifferential calculus rules in Banach spaces, Canadian Journal of Mathematics (1996), 48 (4), 834-848	

\bibitem{KR17} M. I. Krastanov and N. K. Ribarska, Nonseparation of Sets and Optimality Conditions, SIAM J. Control Optim. (2017), 55(3), 1598-1618

\bibitem{KRTs} M. I. Krastanov, N. K. Ribarska, Ts. Y. Tsachev, A Pontryagin maximum principle for
infinite-dimensional problems, SIAM Journal on Control and
Optimization, 49 (2011), No 5, 2155--2182

\bibitem{Kru15} A. Y. Kruger, N. H. Thao, Quantitative Characterizations of Regularity Properties of Collections of Sets, Journal of Optimization Theory and Applications (2015), Volume 164, Issue 1, pp 41–67

\bibitem{Kruger2018} A.Y. Kruger,  D.R. Luke,  N.H. Tao, Set regularities and feasibility
problems,  Mathematical Programming B (2018), 168, 279--311

\bibitem{LY} X. J. Li, J.  Yong, Optimal control theory for infinite
dimensional systems,  Basel, Birkh\"auser, 1994

\bibitem{Mord} B.S. Mordukhovich, Variational Analysis and Generalized Differentiation, I: Basic Theory,
Grundlehren der mathematischen Wissenschaften, Springer, New York (2006)

\bibitem{Penot} J.P. Penot, Calculus Without Derivatives, Graduate Texts in Mathematics, Springer, New York (2013)

\bibitem{R19} N. K. Ribarska, On a property of compactly epi-Lipschitz sets, Comptes rendus de l’Acad\'{e}mie bulgare des Sciences,  72 (2019), 170--173.

\bibitem{Rock80} R. T. Rockafellar, Generalized directional derivatives and subgradients of nonconvex functions, Canad. J. Math. 32(1980), 257-280

\bibitem{HS} H. Sussmann, On the validity of the transversality condition for different
concepts of tangent cone to a set,  Proceedings of the 45-th IEEE
CDC, San Diego, CA, December 13-15, 2006,  241--246.


\end{thebibliography}
\end{document}